\newtheorem{theorem}{Theorem}[section]
\newtheorem{corollary}[theorem]{Corollary}
\newtheorem{lemma}[theorem]{Lemma}
\newtheorem{proposition}[theorem]{Proposition}
\theoremstyle{definition}
\newtheorem{remark}{Remark}[section]
\renewcommand{\to}{\longrightarrow}
\newsavebox{\savepar}
\numberwithin{equation}{section}
\newcounter{labelflag} \setcounter{labelflag}{0}
\newcommand{\labelon}{\setcounter{labelflag}{1}}
\newcommand{\Label}[1]{
                       \ifnum\thelabelflag=1
                          \ifmmode
                             \makebox[0in][l]{\qquad\fbox{\rm#1}}
                          \else
                             \marginpar{\vspace{0.7\baselineskip}
                                        \hspace{-1.1\textwidth}
                                        \fbox{\rm#1}}
                          \fi
                       \fi
                       \label{#1}
                      }
 \newcommand{\dir}{\partial\kern-.570em /}
 \newcommand{\dire}{\partial\kern-.570em /{}^{\rm eq}}
\DeclareMathOperator{\sech}{sech}
\begin{document}

\title{Computing the Maslov index from singularities of a matrix Riccati equation}
\author[T. McCauley]{Thomas McCauley}
\address{Department of Mathematics and Statistics\\
  Boston University}
\email{tmccaule@math.bu.edu}

\maketitle

\begin{abstract}
We study the Maslov index as a tool to analyze stability of steady state solutions to a reaction-diffusion equation in one spatial dimension.  We show that the path of unstable subspaces associated to this equation is governed by a matrix Riccati equation whose solution $S$ develops singularities when changes in the Maslov index occur.  Our main result proves that at these singularities the change in Maslov index equals the number of eigenvalues of $S$ that increase to $+\infty$ minus the number of eigenvalues that decrease to $-\infty$.
\end{abstract}

\section{Introduction}

In this paper we study the Maslov index of a path of Lagrangian subspaces as a tool to analyze the stability of a steady state solution to a reaction-diffusion equation.  In particular we are interested in using a matrix Riccati equation to describe this path.  Typically the solution $S$ to this matrix Riccati equation will develop singularities at a finite collection of times and each singularity indicates a contribution to the Maslov index.  Our main results are Theorem \ref{SpecialCaseMainTheorem} and Theorem \ref{MainTheorem}, which state that the change in Maslov index at a singularity is the number of eigenvalues of $S$ that increase to $+\infty$ minus the number of eigenvalues that decrease to $-\infty$.

Connections between the Morse index of a self-adjoint operator (the number of its negative eigenvalues) and the Maslov index have been known for some time.  For example, Smale \cite{Smale} studied elliptic operators on smooth manifolds with boundary and Bott \cite{Bott} used intersection theory to study the Morse index of geodesics.  In \cite{Arnold1} and \cite{Arnold2}, Arnol'd related classical Sturm-Liouville theory to the Maslov index and symplectic geometry, similar to the method used by Conley in \cite{Conley}.   The Maslov index was first used to study stability of steady state solutions of evolution equations in \cite{Jones}, where Jones considered a Schr\"{o}dinger-type equation in one spatial dimension.  Since then it has been used to study stability in a variety of evolution equations.  See \cite{CDB} for numerous examples and an extensive collection of references.  Most examples consider equations with one spatial dimension, where there are many tools to study stability, most notably the Evans function.  It is not known how to extend the Evans function to multiple spatial dimensions except in some special cases, for example \cite{OS}, but many are optimistic that Maslov index techniques can be used in this harder setting.  See \cite{CJM} and \cite{DJ} for recent results along these lines.

In this paper we restrict our attention to the case of one spatial dimension, with the hope that techniques from this easier case will guide research in the harder case of multiple spatial dimensions, and we consider reaction-diffusion equations of the form
\begin{align*}
u_t &= \mathcal{D}u_{xx} + f(u, x),
\end{align*}
where $x \in \mathbb{R}, t  \in \mathbb{R}^+, u(x, t) \in \mathbb{R}^n$, $\mathcal{D}$ is an invertible diagonal matrix, and $f(u, x)$ is an analytic function of $u$ and $x$ such that $D_u f$ is a symmetric matrix.  Suppose $\hat{u}$ is a steady state solution such that, as $x \to \pm \infty$, $\hat{u}$ approaches some finite limit exponentially fast and $Df_u(\hat{u}, x)$ approaches some constant coefficient matrix.  To analyze stability of $\hat{u}$ we linearize this equation about $\hat{u}$ and we consider the eigenvalue problem
\begin{align*}
\mathcal{L} u \overset{\rm def}{=} \mathcal{D}u_{xx} + Df_u(\hat{u}, x) = \lambda u,
\end{align*}
where we consider $\mathcal{L}$ an operator on some appropriate function space, like $L^2(\mathbb{R})$.  Associated to this linear second order equation is a path of Lagrangian subspaces of $\mathbb{R}^{2n}$, the path of unstable subspaces, and the Maslov index of this path can be used to detect eigenvalues of $\mathcal{L}$.

In \cite{BM}, Beck and Malham study a matrix Riccati equation that governs this path of Lagrangian subspaces.  The solution $S$ typically develops singularities at a finite collection of times, and Beck and Malham prove that at a singularity the unsigned change in Maslov index is the number of singular eigenvalues of $S$ \cite[Theorem 4.1]{BM}.  In this paper we prove that this change in Maslov index is precisely the number of eigenvalues that increase to $+\infty$ minus the number of eigenvalues that decrease to $-\infty$.

This paper is organized as follows.  In Section \ref{Background} we pose our question about the stability of a steady state solution to our reaction-diffusion equation, motivating our interest in the Maslov index of the path of unstable subspaces.  Section \ref{DynamicalSystemBackground} presents the class of reaction-diffusion equations we consider and the path of unstable subspaces related to the eigenvalue problem.  These subspaces are Lagrangian, so the unstable subspaces form a path in the Lagrangian Grassmannian manifold $\Lambda(n)$.  Section \ref{SymplecticGeoBackground} recalls some pertinent facts about this manifold.  Section \ref{MaslovIndexBackground} defines the Maslov index, a fixed-endpoint homotopy invariant of paths in $\Lambda(n)$ that counts the signed intersections of a path $W(x)$ with a fixed reference plane $V$.  The Maslov index is computed by summing the signature of the crossing form $\Gamma(W, V, x)$ over all intersections $x$.

In Section \ref{ComputeMaslovChange} we show that the path of unstable subspaces $W(x)$ is governed by a matrix Riccati equation and we investigate how singularities of the solution $S(x)$ contribute to the Maslov index.  Section \ref{RiccatiEqn} presents the matrix Riccati equation and we express the change in Maslov index in terms of the singular eigenvalues of $S$, which we denote $\mu_1, \ldots, \mu_k$.  In the case that there is one singular eigenvalue $\mu$, this expression allows us to prove\\

\noindent {\bf Theorem \ref{SpecialCaseMainTheorem}.} \emph{If $\dim W(x_0) \cap V = 1$, then the signature of $\Gamma(W, V, x_0)$ is ${\rm sign}\, \dot{\mu}$.}\\

Section \ref{HigherDimIntersections} considers the case that our path of Lagrangian subspaces intersects our reference plane in a $k$-dimensional subspace, $k > 1$, and we have\\

\noindent {\bf Theorem \ref{MainTheorem}.} \emph{The signature of $\Gamma(W, V, x_0)$ is $\# \{ \mu_j :  \dot{\mu_j} \to + \infty \} - \# \{ \mu_j : \dot{\mu_j} \to -\infty \}$.}\\

Remark \ref{OneSidedLimits} relates the behavior of $\dot{\mu}_j$ to the singular behavior of $\mu_j$. These theorems allow us to compute the change in Maslov index at singularities of $S$.

In Section 4 we study stability in two reaction-diffusion equations using the Maslov index.  For each equation we outline how Theorem \ref{SpecialCaseMainTheorem} and Theorem \ref{MainTheorem} allow us to compute the Maslov index from the singularities of the solution to a matrix Riccati equation, and the Maslov index is computed for some specific values of our parameters.

We would like to thank Margaret Beck for many helpful conversations and for suggesting this problem, which arose from her discussions with Simon Malham and Robert Marangell.

\section{The Maslov index and stability}\label{Background}

In this Section we relate the stability of a steady state solution $\hat{u}$ to a reaction-diffusion equation to the Maslov index of a path of Lagranian subspaces.  Section 2.1 poses the eigenvalue problem associated with the linearization of our reaction-diffusion equation about $\hat{u}$.  The remainder of the Section presents the Maslov index, a topological invariant that detects solutions to this eigenvalue problem.  Section \ref{SymplecticGeoBackground} recalls some basic facts about the manifold of Lagrangian subspaces of $\mathbb{R}^{2n}$, denoted $\Lambda(n)$, and Section 2.3 defines the Maslov index, an integer that counts the signed intersections of a path of Lagrangian subspaces with a fixed reference plane.

\subsection{Dynamics of a steady state solution}\label{DynamicalSystemBackground}

Consider the reaction-diffusion equation
\begin{align}\label{ReactionDiffusion}
u_t &= \mathcal{D}u_{xx} + f(u, x),
\end{align}
where $x \in \mathbb{R}, t \in \mathbb{R}^+$, $u(x, t) \in \mathbb{R}^n$, $\mathcal{D}$ is an invertible diagonal matrix, and $f(u, x)$ is analytic in $u$ and $x$.  Suppose further that $Df_u(\hat{u}, x)$ is symmetric.  We are interested in studying the stability of stationary solutions to (\ref{ReactionDiffusion}).  Suppose $\hat{u}$ is a stationary solution, i.e. a solution satisfying $\hat{u}_t \equiv 0$.  We consider the linearization of (\ref{ReactionDiffusion}) about $\hat{u}$ and the corresponding eigenvalue problem,
\begin{align}\label{LinearizedReactionDiffusion}
\mathcal{L}u \overset{\rm def}{=} \mathcal{D} u_{xx} + Df_u(\hat{u}, x) u = \lambda u.
\end{align}
Assume that as $x \to \pm \infty$, $\hat{u}$ approaches some finite limit exponentially fast and $Df_u(\hat{u}, x)$ approaches some constant matrix.  $\mathcal{L}$ is self-adjoint, considered as an operator on an appropriate function space, e.g. $L^2(\mathbb{R})$, so $\sigma(\mathcal{L}) \subset \mathbb{R}$.  We assume the essential spectrum of $\mathcal{L}$ satisfies $\sigma_{\rm ess}(\mathcal{L}) \subset \{ \lambda : \lambda < -\delta < 0 \}$ for some fixed $\delta > 0$.  Thus if $\lambda \in \sigma(\mathcal{L})$ is positive, it must be an eigenvalue.  A necessary condition for $\hat{u}$ to be stable is that $\mathcal{L}$ has no positive eigenvalues.  Hence, to find the eigenvalues of $\mathcal{L}$ we look for solutions to (\ref{LinearizedReactionDiffusion}) that lie in our appropriate function space.

Let $p = u$ and $q = u_x$.  Then
\begin{align}\label{FirstOrderSystem}
\left( \begin{array}{c}
p_x\\
q_x \end{array} \right) &= 
\left( \begin{array}{cc}
0 & I\\
\mathcal{D}^{-1}(\lambda - Df_u(\hat{u}, x)) & 0 \end{array} \right)
\left( \begin{array}{c}
p\\
q \end{array} \right),
\end{align}
so we study this first order system.  Here $I$ is the $n \times n$ identity matrix.  A solution $v = (p^T, q^T)^T$ to (\ref{FirstOrderSystem}) that decays quickly as $x \to \pm \infty$ will lie in our function space, and thus solve our eigenvalue problem.

\begin{remark}\label{AnalyticRemark}
Because $\hat{u}$ solves a differential equation with analytic coefficients, $\hat{u}$ is an analytic function.  It follows that if $(p^T, q^T)^T$ solves equation (\ref{FirstOrderSystem}) then $(p^T, q^T)^T$ is analytic as well.
\end{remark}

Let $\Phi(x, s, \lambda)$ be the fundamental solution matrix of (\ref{FirstOrderSystem}).  Following \cite{CDB}, for $\lambda \notin \sigma_{\rm ess}(\mathcal{L})$ we say the \emph{stable subspace} at $(x_0, \lambda)$ is
\begin{align*}
E^s(x_0, \lambda) \overset{\rm def}{=} \{ u \in \mathbb{R}^{2n} : \lim_{x \to \infty} \Phi(x, x_0, \lambda) u = 0 \}
\end{align*}
and the \emph{unstable subspace} at $(x_0, \lambda)$ is
\begin{align*}
E^u(x_0, \lambda) \overset{\rm def}{=} \{ u \in \mathbb{R}^{2n} : \lim_{x \to -\infty} \Phi(x, x_0, \lambda) u = 0 \}.
\end{align*}
$\lambda_0$ is an eigenvalue of $\mathcal{L}$ if and only if, for some $x_0 \in \mathbb{R}$, $E^s(x_0, \lambda_0)$ and $E^u(x_0, \lambda_0)$ have a non-trivial intersection.  To find such $\lambda_0$, we compute the Maslov index of the path $x \mapsto E^u(x, \lambda)$, for it is conjectured that the Maslov index of $E^u(x, \lambda)$ is (minus) the number of eigenvalues $\lambda_0$ such that $\lambda_0 > \lambda$.  See \cite[Section 1]{JLM} for an example of such a result for matrix Hill's equations.  Focusing on intersections of $E^u(x, \lambda)$ and $E^s(\infty, \lambda)$ offers computational advantages, as the path $E^u(x, \lambda)$ is governed by the differential equation (\ref{FirstOrderSystem}).  For this reason, our goal is to find such intersections.

In this paper we say a subspace $V \subset \mathbb{R}^{2n}$ is \emph{Lagrangian} if $\dim V = n$ and $\langle v, Jw \rangle = 0$ whenever $v,w \in V$, where
\begin{align*}
J &= \left( \begin{array}{cc}
0 & -I\\
I & 0 \end{array} \right).
\end{align*}

\begin{remark}
The bilinear form $(v, w) \mapsto \langle v, Jw \rangle$ is an example of a symplectic form on $\mathbb{R}^{2n}$.  Given any symplectic form on $\mathbb{R}^{2n}$, one may define a Lagrangian subspace.  See \cite{MS} for details.
\end{remark}

When $\mathcal{D} = I$, $E^u(x, \lambda), E^s(x, \lambda) \subset \mathbb{R}^{2n}$ are Lagrangian if both subspaces are $n$-dimensional \cite[Section 4]{CDB}. Therefore, for a fixed $\lambda$, $x \mapsto E^u(x, \lambda)$ represents a path in $\Lambda(n)$, the manifold of Lagrangian subspaces of $\mathbb{R}^{2n}$.  The Maslov index of a path in $\Lambda(n)$ is an integer that represents the total (signed) intersections with a reference plane.

When $\mathcal{D} \neq I$, $E^u(x, \lambda)$ and $E^s(x, \lambda)$ are not Lagrangian with respect to the standard symplectic form.  However, these subspaces are Lagrangian with respect to the symplectic form $\omega(v, w) = \langle v, JM w \rangle$, where $M$ is given by
\begin{align*}
M &= \left( \begin{array}{cc}
\sqrt{\mathcal{D}} & 0\\
0 & \sqrt{ \mathcal{D}} \end{array} \right).
\end{align*}
Therefore the subspaces define a path in the Lagrangian Grassmannian manifold (with respect to $\omega$) and we suspect that the results of Section 3 still hold in this case.  So that we may work with the standard symplectic form on $\mathbb{R}^{2n}$, we assume henceforth that $\mathcal{D} = I$.  In the following sections we recall important facts about $\Lambda(n)$ and we define the Maslov index.

\subsection{The Lagrangian Grassmannian manifold}\label{SymplecticGeoBackground}

This Section presents a few important facts about the Lagrangian Grassmannian manifold $\Lambda(n)$, following \cite[Chapter 2]{MS}.  Given a Lagrangian subspace $V \subset \mathbb{R}^{2n}$, we can represent $V$ by the $2n \times n$ matrix $M = ( v_1, \ldots, v_n)$, where the column vectors $\{ v_1, \ldots, v_n \}$ form a basis of $V$.  It follows that $M$ has rank $n$ and ${\rm range}\, M = V$.  Moreover, we may choose $\{ v_1, \ldots, v_n \}$ to be an orthonormal basis.  We define the $n \times n$ matrices $X$ and $Y$ by
\begin{align}\label{SubspaceRepresentation}
\left( \begin{array}{cccc}
v_1 & v_2 & \ldots & v_n \end{array} \right) &=
\left( \begin{array}{c}
X\\
Y \end{array} \right).
\end{align}
Let $Z = X + iY$.  In \cite[Section 2.3]{MS}, McDuff and Salamon show that $Z \in U(n)$.  Conversely, given $Z  =  X + iY \in U(n)$, the range of the matrix $(X^T, Y^T)^T : \mathbb{R}^n \to \mathbb{R}^{2n}$ is a Lagrangian subspace of $\mathbb{R}^{2n}$.  In this way, $Z$ represents a Lagrangian subspace.  Moreover, for any $O \in O(n)$, the unitary matrix $ZO$ represents the same Lagrangian subspace, and if $Z$ and $Z'$ represent the same subspace then they differ by some $O \in O(n)$.  It follows that $\Lambda(n)$ is diffeomorphic to the homogeneous space $U(n)/O(n)$.  With this diffeomorphism, we may (non-uniquely) represent a Lagrangian subspace $V$ as a unitary matrix $Z = X + i Y$.

Furthermore, \cite[Lemma 2.30]{MS} proves that the $2n \times n$ matrix
\begin{align*}
\left( \begin{array}{c}
I\\
A \end{array} \right)
\end{align*}
represents a Lagrangian subspace if and only if $A$ is symmetric.  It follows that, in equation (\ref{SubspaceRepresentation}), if $X$ is invertible, $YX^{-1}$ is symmetric.

\subsection{The Maslov index}\label{MaslovIndexBackground}

In this Section we define the Maslov index for a path in $\Lambda(n)$, following the exposition in \cite[Chapter 2]{MS} and \cite{RS}.  We are primarily concerned with the Maslov index of the path of unstable subspaces $E^u(x, \lambda) : (-\infty, \infty) \to \Lambda(n)$.  Since $Df_u(\hat{u}, x)$ approaches some constant matrix as $x \to \pm \infty$, the limits $\lim_{x \to \infty }E^u(x, \lambda)$ and $\lim_{x \to -\infty}E^u(x, \lambda)$ exist.  Therefore we may extend $E^u(x, \lambda)$ to be a path whose domain is the extended real line $\mathbb{R} \cup  \{ \pm \infty \}$.  Because $\mathbb{R} \cup \{ \pm \infty \}$ is homeomorphic to a compact interval, we may define the Maslov index for paths whose domain is $[0, 1]$.

Suppose $Z(x) = X(x) + i Y(x) : [0, 1] \to U(n)$ is a path of unitary matrices representing a path $W(x)$ of Lagrangian subspaces in $\mathbb{R}^{2n}$.  Suppose further $W(x)$ has a nontrivial intersection with a fixed Lagrangian subspace $V$ at $x_0$.  The \emph{crossing form} at $x_0$ is the quadratric form given by
\begin{align}\label{CrossingForm}
\Gamma(W, V, x_0) u &= \langle X(x_0)u, \dot{Y}(x_0)u \rangle - \langle Y(x_0)u, \dot{X}(x_0)u \rangle,
\end{align}
restricted to the subspace of $u \in \mathbb{R}^n$ satisfying $(X(x_0)^T, Y(x_0)^T)^T u \in W(x_0) \cap V$.  Because $(X(x_0)^T, Y(x_0)^T)^T$ is rank $n$, there is a unique $u$ for each $v \in W(x_0) \cap V$.  Henceforth we let $V$ be the vertical subspace ${\rm span}\{ e_{n + 1}, \ldots, e_{2n} \}$.  $W(x_0)$ has non-trivial intersection with $V$ precisely when $\det X(x_0) = 0$ and $\dim \ker X(x_0) = \dim W(x_0) \cap V$ \cite[Section 2.3]{MS}.  In this case, $(X(x_0)^T, Y(x_0)^T)^Tu \in W(x_0) \cap V$ when $u \in \ker X(x_0)$, so that the crossing form becomes
\begin{align*}
\Gamma(W, V, x_0)u = - \langle Y(x_0) u, \dot{X}(x_0) u \rangle.
\end{align*}
We remark that we may also represent $W(x)$ by the path $( I,  S(x))^T$, where $S = YX^{-1}$, whenever $X$ is invertible.  For this reason, $S(x)$ develops a singularity when $W(x)$ has nontrivial intersection with $V$.

The \emph{signature} of a non-degenerate quadratic form $Q$ is the number of positive eigenvalues minus the number of negative eigenvalues of the associated symmetric matrix.  Note that we do not distinguish between the quadratic form and its associated symmetric matrix.  We say that $x_0$ is a \emph{regular crossing} if $\Gamma(W, V, x_0)$ is a non-degenerate quadratic form.  Suppose $W(x)$ has only regular crossings, which must occur at a finite collection of times $x_1 < x_2 < \ldots < x_N$ since regular crossings are isolated \cite[\textsection 2]{RS}.  Following \cite{RS}, we define the \emph{Maslov index} of the path $W(x)$ to be
\begin{align*}
{\rm Mas}(W(x);V) = \sum_{j = 1}^N {\rm sgn}\, \Gamma( W, V, x_j).
\end{align*}

Let $\Sigma_k(V) \subset \Lambda(n)$ be the collection of Lagrangian subspaces $V'$ such that $\dim V \cap V' = k$.  The \emph{Maslov cycle} is $\Sigma(V) = \bigcup_{k = 1}^n \Sigma_k(V)$, an algebraic variety in $\Lambda(n)$.  In the literature $\Sigma(V)$ is also called the \emph{train} of $V$.  The highest stratum, $\Sigma_1(V)$, has codimension 1 in $\Lambda(n)$.

\begin{remark}\label{GenericIntersection}
A path in $\Lambda(n)$ generically intersects $\Sigma(V)$ in the highest stratum, $\Sigma_1(V)$ \cite[\textsection 2.3]{MS}.  It follows that a non-trivial intersection of $W(x)$ and $V$ is generically 1-dimensional.
\end{remark}

Suppose we wish to compute the Maslov index of $W(x)$ with respect to a reference plane $V'$ different from the vertical subspace.  Typically it suffices to find ${\rm Mas}(W(x); V)$ because of

\begin{proposition}
Let $W(x) : [0, 1] \to \Lambda(n)$ be a path and let $W_0 = W(0)$ and $W_1 = W(1)$.  Suppose that $W_0, W_1 \in \Sigma_0(V) \cap \Sigma_0(V')$.  Then ${\rm Mas}(W(x) ; V) = {\rm Mas}(W(x) ; V') + s( V, V' ; W_0, W_1 )$, where $s(V, V' ; W_0, W_1)$ is the Maslov index of the loop consisting of a curve in $\Sigma_0(V)$ from $W_1$ to $W_0$ followed by a curve in $\Sigma_0(V')$ from $W_0$ to $W_1$.
\end{proposition}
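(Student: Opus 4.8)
The plan is to deduce the identity from four standard properties of the Maslov index, all recorded in \cite{RS} and \cite[Chapter 2]{MS}: (i) additivity under concatenation of paths; (ii) the vanishing of $\mathrm{Mas}(\cdot\,;V)$ on any path lying entirely in $\Sigma_0(V)$; (iii) invariance under fixed-endpoint homotopy; and (iv) the fact that the Maslov index of a \emph{loop} in $\Lambda(n)$ does not depend on the reference Lagrangian. The rest is bookkeeping with two auxiliary paths.

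First I would fix those paths. By the graph parametrization of Section \ref{SymplecticGeoBackground} (and its analogue for an arbitrary reference plane), both $\Sigma_0(V)$ and $\Sigma_0(V')$ are affine spaces of symmetric matrices, hence contractible. Since $W_0,W_1\in\Sigma_0(V)\cap\Sigma_0(V')$, I would choose a path $\alpha$ in $\Sigma_0(V)$ from $W_1$ to $W_0$ and a path $\beta$ in $\Sigma_0(V')$ from $W_0$ to $W_1$; contractibility pins down the rel-endpoint homotopy classes of $\alpha$ and $\beta$, so by (iii)--(iv) the loop $\alpha*\beta$ has a Maslov index independent of these choices, equal by definition to $s(V,V';W_0,W_1)$. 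After perturbing $\alpha$ within $\Sigma_0(V)$ and $\beta$ within $\Sigma_0(V')$ rel endpoints, and choosing all concatenation parameters away from crossings, I may assume every path below has only regular crossings with the relevant reference plane, so the crossing-form definition of $\mathrm{Mas}$ from Section \ref{MaslovIndexBackground} applies; by (iii) none of this alters any Maslov index.

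Next I would introduce the loop $\Lambda:=W*\alpha$ based at $W_0$. Properties (i) and (ii) give $\mathrm{Mas}(\Lambda;V)=\mathrm{Mas}(W;V)+\mathrm{Mas}(\alpha;V)=\mathrm{Mas}(W;V)$, while (iv) gives $\mathrm{Mas}(\Lambda;V')=\mathrm{Mas}(\Lambda;V)$ because $\Lambda$ is a loop; hence $\mathrm{Mas}(\Lambda;V')=\mathrm{Mas}(W;V)$. Expanding the left side by (i) yields $\mathrm{Mas}(\Lambda;V')=\mathrm{Mas}(W;V')+\mathrm{Mas}(\alpha;V')$, and applying (i) and (ii) to the loop $\alpha*\beta$ (with (iv) to match the convention in the definition of $s$) gives $\mathrm{Mas}(\alpha;V')=\mathrm{Mas}(\alpha*\beta;V')=s(V,V';W_0,W_1)$. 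Chaining these equalities produces $\mathrm{Mas}(W;V)=\mathrm{Mas}(W;V')+s(V,V';W_0,W_1)$.

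The one ingredient that is not a formal manipulation is (iv): that $\mathrm{Mas}(\cdot\,;V)$ induces the \emph{same} isomorphism $\pi_1(\Lambda(n))\to\mathbb Z$ for every Lagrangian $V$. This is the step I expect to be the crux, and the one where I would fall back on the description of the Maslov index of a loop as the winding number of $x\mapsto\det(Z(x))^2\in S^1$ from \cite[Chapter 2]{MS}, which visibly involves no reference plane. A lesser technical nuisance is keeping all concatenation points and perturbed crossings regular so that the elementary definition of $\mathrm{Mas}$ remains applicable; this can be sidestepped altogether by working with the general path version of the index in \cite{RS} and its catenation axiom.
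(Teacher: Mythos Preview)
Your proof is correct and follows essentially the same route as the paper's: concatenate $W$ with the path $\alpha\subset\Sigma_0(V)$ to form a loop, use that the Maslov index of a loop is independent of the reference plane to pass from $V$ to $V'$, and then peel off $\mathrm{Mas}(\alpha;V')=\mathrm{Mas}(\alpha*\beta;V')=s(V,V';W_0,W_1)$ using that $\beta\subset\Sigma_0(V')$ contributes nothing. Your additional remarks on existence/well-definedness of $\alpha,\beta$ and on regularizing crossings are sound elaborations the paper leaves implicit.
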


The integer $s(V, V' ; W_0, W_1)$ is called the \emph{H\"{o}rmander index}.  This Proposition appears in \cite[Equation (2.9)]{Duistermaat} without proof, so we provide one now.  This proof will rely on the following two properties of the Maslov index.  First, suppose $\gamma : [0, 1] \to \Lambda(n)$ is a loop.  Then ${\rm Mas}(\gamma; V) = {\rm Mas}(\gamma; V')$.  This follows from the fact that the Maslov index for loops has an equivalent definition that does not depend on the choice of a reference plane \cite[Section 2.3]{MS}.

Second, suppose $\gamma_1, \gamma_2 : [0, 1] \to \Lambda(n)$ are two curves with $\gamma_1(1) = \gamma_2(0)$.  The concatenated curve $\gamma_2 \cdot \gamma_1$ satisfies
\begin{align*}
{\rm Mas}(\gamma_2 \cdot \gamma_1 ; V') = {\rm Mas}(\gamma_1; V') + {\rm Mas}(\gamma_2; V').
\end{align*}
This is proven in \cite[Theorem 2.3]{RS}.

\begin{proof}
Let $\gamma_1$ be the curve in $\Sigma_0(V)$ from $W_1$ to $W_0$ and let $\gamma_2$ be the curve in $\Sigma_0(V')$ from $W_0$ to $W_1$.  Because $\gamma_1$ lies in $\Sigma_0(V)$, ${\rm Mas}(\gamma_1; V) = 0$ and 
\begin{align*}
{\rm Mas}(W(x) ; V) &= {\rm Mas}(W(x) ; V) + {\rm Mas}( \gamma_1 ; V) = {\rm Mas}(\gamma_1 \cdot W(x) ; V).
\end{align*}
Furthermore, $\gamma_1 \cdot W(x)$ is a loop, so ${\rm Mas}(\gamma_1 \cdot W(x) ; V) = {\rm Mas}(\gamma_1 \cdot W(x) ; V')$.  Since $\gamma_2$ lies in $\Sigma_0(V')$, ${\rm Mas}(\gamma_2; V') = 0$ and we have
\begin{align*}
{\rm Mas}(\gamma_1 \cdot W(x) ; V') &= {\rm Mas}(\gamma_1 \cdot W(x); V') + {\rm Mas}(\gamma_2; V')\\
&= {\rm Mas}(W(x); V') + {\rm Mas}(\gamma_2 \cdot \gamma_1; V').
\end{align*}
By definition, $s(V, V' ; W_0, W_1) = {\rm Mas}(\gamma_2 \cdot \gamma_1 ; V')$, so we have proven our claim.
\end{proof}

\section{Singularities of a matrix Riccati equation and the crossing form}\label{ComputeMaslovChange}

In this section we show that the path $W(x)$ determined by equation (\ref{FirstOrderSystem}) is governed by a matrix Riccati equation whose solution $S$ may develop singularities at a finite collection of times.  Theorems \ref{SpecialCaseMainTheorem} and \ref{MainTheorem} state that at singularities the contribution to the Maslov index is the number of singular eigenvalues that increase to $+\infty$ minus the number of singular eigenvalues that decrease to $-\infty$.  Section 3.1 presents this matrix Riccati equation and relates its solution to the signature of the crossing form.  We immediately have Theorem \ref{SpecialCaseMainTheorem}, which computes the change in Maslov index at intersections where $\dim W(x) \cap V = 1$, which is the generic case.  Section 3.2 generalizes this result to higher dimensional intersections in Theorem \ref{MainTheorem}.

\subsection{Matrix Riccati equation}\label{RiccatiEqn}

We consider a path of Lagrangian subspaces $W(x)$ determined by a differential equation
\begin{align}\label{MatrixODE}
 \left( \begin{array}{c}
\dot{X}\\
\dot{Y} \end{array} \right) &=
\left( \begin{array}{cc}
A &  B \\
C & D \end{array} \right)
\left( \begin{array}{c}
X\\
Y \end{array} \right).
\end{align}
We are primarily concerned with equation (\ref{FirstOrderSystem}) but we consider this more general equation with an eye towards applying this method to a larger class of evolution equations discussed in Section \ref{ConcludingRemarks}.  Equation (\ref{MatrixODE}) implies $\dot{X} = AX + BY$ and $\dot{Y} = CX + DY$.  Letting $S = YX^{-1}$ we have
\begin{align}\label{FirstRiccatiEqn}
\dot{S} &= \dot{Y}X^{-1} + Y \dot{(X^{-1})} = (CX + DY)X^{-1} - YX^{-1}\dot{X}X^{-1} \nonumber \\
&= C + D YX^{-1} - YX^{-1}(AX + BY) X^{-1}\\
&= C + D YX^{-1} - YX^{-1} A + YX^{-1}BYX^{-1} \nonumber \\
&= C + DS - SA + SBS, \nonumber
\end{align}
showing that $S$ satisfies a matrix Riccati equation.

By equation (\ref{CrossingForm}), the crossing form is determined by the symmetric matrix $X(x_0)^T\dot{Y}(x_0) - Y(x_0)^T\dot{X}(x_0)$ restricted to those $u$ such that $(X(x_0)^T, Y(x_0)^T)^Tu \in W(x_0) \cap V$, since
\begin{align*}
\langle X(x_0) u, \dot{Y}(x_0) u \rangle - \langle Y(x_0), \dot{X}(x_0) u \rangle &= \langle u, \big(X(x_0)^T\dot{Y}(x_0) - Y(x_0)^T \dot{X}(x_0)\big) u \rangle.
\end{align*}
Suppose $X(x)$ is invertible.  Then, letting $S = YX^{-1}$, 
\begin{align*}
(X^{-1})^T(X^T\dot{Y} - Y^T\dot{X})X^{-1} &= \dot{Y}X^{-1} - (X^{-1})^TY^T \dot{X}X^{-1}\\
= \dot{Y}X^{-1} - (YX^{-1})^T\dot{X} X^{-1} &= \dot{Y}X^{-1}  - YX^{-1} \dot{X} X^{-1}\\
= \dot{Y}X^{-1}  + Y \dot{(X^{-1})} &= \dot{S}
\end{align*}

The following theorem is a classical result about change of coordinates for quadratic forms, whose proof can be found in \cite[Section 2.5]{Gerstein}.

\begin{theorem}[Sylvester's Law of Inertia] \label{SylvestersLaw}
Let $A$ be a real $n \times n$ symmetric matrix and let $U$ be a real invertible matrix.  Then ${\rm sgn}\, A = {\rm sgn}\, U^T A U$.
\end{theorem}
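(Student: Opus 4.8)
The plan is to prove Sylvester's law via the variational characterization of the inertia indices, which makes invariance under congruence transparent. Write $n_+(A)$, $n_-(A)$, $n_0(A)$ for the number of positive, negative, and zero eigenvalues of a real symmetric matrix $A$, counted with multiplicity, and let $q_A(x) = \langle x, Ax\rangle$ be the associated quadratic form. The key lemma I would establish first is that
\[
n_-(A) = \max\{\dim L : L \subseteq \mathbb{R}^n \text{ a subspace with } q_A|_L \text{ negative definite}\},
\]
and symmetrically $n_+(A)$ equals the same maximum with ``negative'' replaced by ``positive.''

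To prove the lemma I would diagonalize $A = Q^T\Lambda Q$ with $Q$ orthogonal and $\Lambda$ diagonal, and let $L_-$ (resp.\ $L_{\ge 0}$) be the span of the eigenvectors with negative (resp.\ nonnegative) eigenvalue. On $L_-$ the form $q_A$ is negative definite and $\dim L_- = n_-(A)$, which gives the inequality $\ge$. For the reverse inequality, if $L$ is any subspace with $\dim L > n_-(A)$, then $\dim L + \dim L_{\ge 0} > n_-(A) + (n - n_-(A)) = n$, so $L \cap L_{\ge 0} \ne \{0\}$; any nonzero vector in this intersection has $q_A \ge 0$, so $q_A|_L$ is not negative definite. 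This forces the maximal dimension to be exactly $n_-(A)$.

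Given the lemma the theorem follows quickly. Since $U$ is invertible, $x \mapsto Ux$ is a linear automorphism of $\mathbb{R}^n$, and $q_{U^T A U}(x) = \langle x, U^T A U x\rangle = \langle Ux, A(Ux)\rangle = q_A(Ux)$. Hence for any subspace $L$ the form $q_{U^T A U}|_L$ is negative definite if and only if $q_A|_{UL}$ is negative definite, and $\dim UL = \dim L$. Letting $L$ range over all subspaces (so that $UL$ does too) and invoking the lemma gives $n_-(U^T A U) = n_-(A)$; applying the same identity to $-A$ gives $n_+(U^T A U) = n_+(A)$. Therefore $\sgn(U^T A U) = n_+(U^T A U) - n_-(U^T A U) = n_+(A) - n_-(A) = \sgn A$.

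The only real content is the lemma, and within it the single step needing a word of care is the dimension count producing a nonzero vector in $L \cap L_{\ge 0}$; everything else is bookkeeping. (An alternative is a connectedness argument: join $U$ to $I$ or to $\mathrm{diag}(-1,1,\dots,1)$ by a path in the invertible matrices, observe that congruence by an invertible matrix preserves rank and hence the multiplicity of the eigenvalue $0$, so along the path no eigenvalue can cross $0$ and the inertia is locally constant, while the endpoint $\mathrm{diag}(-1,1,\dots,1)$ conjugates $A$ to a matrix similar to it and therefore with the same spectrum. I prefer the variational proof since it sidesteps the eigenvalue-continuity technicalities.)
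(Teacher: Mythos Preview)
Your argument is correct: the variational characterization of $n_\pm(A)$ via maximal positive/negative subspaces is one of the standard proofs of Sylvester's law, and the dimension count $\dim L + \dim L_{\ge 0} > n$ forcing a nontrivial intersection is the only nontrivial step, which you have handled cleanly.

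There is no proof in the paper to compare against: the paper states the theorem as a classical result and simply cites \cite[Section 2.5]{Gerstein} for a proof. Your variational argument is entirely appropriate here and is in fact the approach most textbooks take. The parenthetical connectedness argument you sketch is also valid, though as you note it requires a bit of care about the two components of $GL_n(\mathbb{R})$ and continuity of eigenvalues; the variational proof is indeed the cleaner choice.
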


Suppose $\dim W(x_0) \cap V = k$.  Then $\dim \ker X(x_0) = k$ and $S(x)$ has $k$ singular eigenvalues $\mu_1(x), \ldots, \mu_k(x)$ \cite[Theorem 4.1]{BM}.  As discussed in Section \ref{SymplecticGeoBackground}, $S(x)$ is symmetric for each $x$ so we may take an orthonormal frame $\{ w_j(x) \}_{j = 1}^n$ for $\mathbb{R}^n$ such that $S(x)w_j(x) = \mu_j(x)w_j(x)$.  For any $k$-dimensional subspace $\widehat{W} \subset \mathbb{R}^n$,  Theorem \ref{SylvestersLaw} implies that when $X$ is invertible,
\begin{align}\label{CoordChange}
{\rm sgn}\, (X^T \dot{Y} - Y^T \dot{X})\big|_{\widehat{W}} &= {\rm sgn}\, (X^{-1})^T(X^T \dot{Y} - Y^T \dot{X})X^{-1}\big|_{X(\widehat{W})} = {\rm sgn}\, \dot{S}\big|_{X(\widehat{W})}.
\end{align}

If $\{ w_j(x) \}_{j = 1}^k$ are eigenvectors corresponding to the singular eigenvalues $\{ \mu_j(x) \}_{j = 1}^k$ of $S(x)$, we let ${\rm span}\{w_1(x), \ldots, w_k(x) \} = \overline{W}(x) \subset \mathbb{R}^n$.

\begin{lemma}\label{AnalyticEigenvectors}
$w_j(x)$ may be chosen to be analytic.
\end{lemma}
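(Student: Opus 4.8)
The plan is to invoke the analytic perturbation theory of self-adjoint operators, applied to the matrix-valued function $S(x)$. The key observation is that $S(x) = Y(x)X(x)^{-1}$ is a symmetric matrix depending analytically on $x$ on any interval where $X(x)$ is invertible, since $X$ and $Y$ are analytic by Remark \ref{AnalyticRemark} (they are built from solutions of \eqref{FirstOrderSystem}, equivalently \eqref{MatrixODE} with analytic coefficients), and matrix inversion is analytic on the open set where $\det X \neq 0$. The classical theorem of Rellich (see also Kato, \emph{Perturbation Theory for Linear Operators}, Chapter II, Theorem 6.1) states that a real-analytic family of real symmetric matrices admits a real-analytic choice of eigenvalues and a corresponding real-analytic orthonormal frame of eigenvectors. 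So away from the singularities of $S$, we get analytic $\mu_j(x)$ and $w_j(x)$ immediately.

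The step that needs care is the behavior \emph{at} a singularity $x_0$ of $S$, i.e.\ where $X(x_0)$ is not invertible and some eigenvalues $\mu_j(x)$ blow up. Here I would argue as follows. On a punctured neighborhood $(x_0 - \e, x_0) \cup (x_0, x_0 + \e)$ the eigenvectors $w_j(x)$ are analytic by the previous paragraph; the issue is continuity/analyticity across $x_0$. The trick is to pass to a representation of $W(x)$ that is regular at $x_0$: since $W(x_0)$ is a Lagrangian subspace, after a fixed unitary change of coordinates (independent of $x$) we may assume $W(x_0) \in \Sigma_0(V)$ for a new reference plane, equivalently that in a neighborhood of $x_0$ the path $W(x)$ is represented by $(I, \widetilde S(x))^T$ for an analytic \emph{symmetric} family $\widetilde S(x)$ with no singularity at $x_0$. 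Rellich's theorem then gives an analytic orthonormal eigenframe $\{\widetilde w_j(x)\}$ for $\widetilde S(x)$ near $x_0$. Finally I would transport this frame back to an eigenframe for the original $S(x)$: on the punctured neighborhood where both representations are valid, the two families of eigenspaces agree as subspaces of $W(x)$, so the $\widetilde w_j(x)$, pushed through the (analytic, invertible near $x_0$) change-of-coordinates map relating the two frames, furnish analytic eigenvectors $w_j(x)$ of $S(x)$ that extend analytically through $x_0$. (The eigenvalues $\mu_j(x)$ themselves are of course not analytic at $x_0$ — they blow up — but the \emph{eigenvectors} of the symmetric matrix $S(x)$, which coincide with eigenvectors of $\widetilde S(x)$, are.)

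I expect the main obstacle to be the bookkeeping at $x_0$: making precise the statement that eigenvectors of $S(x)$ on the punctured neighborhood correspond to eigenvectors of the regular representative $\widetilde S(x)$ at $x_0$, and checking that the correspondence is given by an analytic invertible matrix so that analyticity transfers. Concretely, if the coordinate change sending the graph $(I, \widetilde S)^T$ of $W$ to the graph $(X, Y)^T = (X, SX)^T$ is encoded by an analytic $\mathrm{GL}_n$-valued function $T(x)$ (so that $X(x) = T(x)$, say, up to the fixed unitary), then the eigenvector equation $S(x) v = \mu(x) v$ pulls back to a generalized eigenvalue problem $\widetilde S(x)\,(T(x)^{-1} v) = \mu(x)\, (T(x)^T)^{-1}(T(x)^{-1} v)$, whose analytic solvability again follows from Rellich applied to the pencil, equivalently to $(T T^T)^{-1/2}\widetilde S (T T^T)^{-1/2}$. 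I would cite Kato or Rellich for the scalar-parameter analytic case rather than reproving it; the only real work is arranging the hypotheses, i.e.\ producing the analytic regular representative $\widetilde S$ and the analytic transition matrix $T$, both of which are routine consequences of the analyticity in Remark \ref{AnalyticRemark} together with the local coordinate charts on $\Lambda(n)$ from Section \ref{SymplecticGeoBackground}.
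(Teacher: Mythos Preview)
Your instinct to invoke Rellich/Kato on an analytic one-parameter family of real symmetric matrices is exactly what the paper does; the difference lies entirely in how the singularity of $S$ at $x_0$ is handled, and here the paper's route is both simpler and avoids a real gap in your proposal.

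The paper does not change charts on $\Lambda(n)$. Instead it observes, via Cramer's rule, that $(\det X)X^{-1}$ is the adjugate of $X$ and hence is analytic \emph{across} $x_0$. Consequently $(\det X)S = Y\cdot(\det X)X^{-1}$ is an analytic family of real symmetric matrices on a full neighborhood of $x_0$, with no singularity. Rellich/Kato then gives an analytic orthonormal eigenframe for $(\det X)S$, and since a nonzero scalar multiple has the same eigenvectors, this frame is an eigenframe for $S$ wherever $\det X\neq 0$. That is the whole proof: one line of desingularization, then a citation.

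Your chart-change approach runs into trouble precisely at the ``bookkeeping'' step you flag. A fixed symplectic (or unitary) change of coordinates on $\mathbb{R}^{2n}$ relates $S$ and $\widetilde S$ by a matrix M\"obius transformation, $S=(C+D\widetilde S)(A+B\widetilde S)^{-1}$, not by conjugation; eigenvectors of $\widetilde S$ are therefore \emph{not} eigenvectors of $S$, so analyticity of the $\widetilde w_j$ does not transfer directly. Your proposed fix, symmetrizing via $(TT^T)^{-1/2}\widetilde S(TT^T)^{-1/2}$ with $T=X$, fails for exactly the reason you are trying to cure: $X(x_0)$ is singular, so $(TT^T)^{-1/2}$ blows up at $x_0$ and you are back to applying Rellich to a singular family. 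Likewise, recasting $Sw=\mu w$ as the pencil $Yu=\mu Xu$ with $u=X^{-1}w$ does not help without a definiteness hypothesis on $X$, which is absent here. The clean desingularization is the scalar one: multiply by $\det X$.
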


\begin{proof}
By Remark \ref{AnalyticRemark}, $X$ and $Y$ are analytic.  Therefore $S = YX^{-1}$ is analytic with a singularity at $x_0$.  Cramer's rule says that $(X^{-1})_{ij} = (\det X)^{-1} (-1)^{i + j} M_{ji}$, where $M_{ji}$ is the determinant of $X$ with the $j^{th}$ row and $i^{th}$ column removed.  Thus $M_{ji}$ is analytic,  implying that $(\det X) X^{-1}$ is analytic near $x_0$, having no singularity, as $( (\det X)X^{-1})_{ij} = (-1)^{i + j} M_{ji}$.  Therefore $(\det X)S$ is analytic as well.

In particular, $(\det X)S$ is analytic and symmetric, so by \cite[II \textsection 6.2]{Kato} the eigenvectors of $(\det X)S$ may be chosen to be analytic.  Since $S$ and $(\det X)S$ have the same eigenvectors, we have shown that we may choose $w_j(x)$ to be analytic.
\end{proof}

\begin{corollary}\label{ContinuousSubspaces}
$\overline{W}(x)$ is continuous.
\end{corollary}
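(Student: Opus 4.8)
The plan is to read off continuity of $\overline{W}(x)$ directly from the analyticity supplied by Lemma \ref{AnalyticEigenvectors}. Recall that $\overline{W}(x) = \mathrm{span}\{w_1(x),\dots,w_k(x)\}$, where $w_1(x),\dots,w_k(x)$ are the members of an orthonormal eigenframe $\{w_j(x)\}_{j=1}^n$ of $S(x)$ associated to the singular eigenvalues $\mu_1(x),\dots,\mu_k(x)$, chosen to be analytic near the singular time $x_0$. An analytic map is in particular continuous, so $x \mapsto (w_1(x),\dots,w_k(x))$ is a continuous curve in $(\mathbb{R}^n)^k$.

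First I would note that, since $\{w_j(x)\}_{j=1}^n$ is an orthonormal frame for every $x$ in a neighborhood of $x_0$, the sub-tuple $w_1(x),\dots,w_k(x)$ is orthonormal, hence linearly independent, so $\overline{W}(x)$ is a genuine $k$-plane for every such $x$. Then the cleanest way to conclude is to pass to the orthogonal projection $P(x) = \sum_{j=1}^k w_j(x)\,w_j(x)^T$ onto $\overline{W}(x)$: this is a continuous (indeed analytic) map $x \mapsto P(x)$ into the symmetric matrices, and continuity of the curve of subspaces $\overline{W}(x)$ in the Grassmannian $G_k(\mathbb{R}^n)$ is, by definition, the same as continuity of $x \mapsto P(x)$ (one may topologize $G_k(\mathbb{R}^n)$ as the space of rank-$k$ orthogonal projections, or equivalently use the chart in which a nearby plane is the graph of a linear map over $\overline{W}(x_0)$, whose coordinates depend rationally on the entries of the $w_j(x)$). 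This yields the corollary.

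I do not expect a genuine obstacle here: the essential content is entirely in Lemma \ref{AnalyticEigenvectors}, and the rest is the standard correspondence between continuously varying orthonormal $k$-frames and continuously varying $k$-planes. The only point worth a sentence of care is the bookkeeping: one must know that exactly $k$ of the $n$ analytic eigenvalue branches of $S$ become singular at $x_0$, so that ``the singular eigenvectors'' $w_1,\dots,w_k$ are unambiguously a fixed $k$ of the analytic branches produced by Kato's theorem. This is guaranteed by \cite[Theorem 4.1]{BM}, which matches $\dim\ker X(x_0)=k$ with the number of singular eigenvalues of $S$. With that remark in place, the argument is complete.
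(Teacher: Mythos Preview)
Your proposal is correct and matches the paper's approach: the paper states the corollary without proof, treating it as an immediate consequence of Lemma~\ref{AnalyticEigenvectors}, and your argument simply spells out the standard passage from an analytic (hence continuous) orthonormal $k$-frame to a continuous $k$-plane in the Grassmannian.
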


\begin{lemma}\label{LimitOfKernel}
$X(x)^{-1}\overline{W}(x) \to \ker X(x_0)$ as $x \to x_0$.
\end{lemma}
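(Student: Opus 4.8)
The plan is to recast the statement as a convergence in the Grassmannian $\mathrm{Gr}(k,\mathbb{R}^n)$ of $k$-dimensional subspaces, which is compact, so that it suffices to show that every subsequential limit of $X(x)^{-1}\overline W(x)$ as $x\to x_0$ equals $\ker X(x_0)$. First I would record the easy bookkeeping: for $x$ in a punctured neighborhood of $x_0$ the matrix $X(x)$ is invertible (the crossing at $x_0$ is isolated, and $S=YX^{-1}$ is regular there), and $\overline W(x)$ is genuinely $k$-dimensional because $w_1(x),\dots,w_k(x)$ are part of an orthonormal frame, hence independent; applying the invertible map $X(x)^{-1}$ keeps the dimension, so $X(x)^{-1}\overline W(x)\in\mathrm{Gr}(k,\mathbb{R}^n)$, and any subsequential limit $L$ of it is again $k$-dimensional. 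Since the setup gives $\dim\ker X(x_0)=k$ as well, it is then enough to prove the single inclusion $L\subseteq\ker X(x_0)$, for a $k$-dimensional subspace contained in a $k$-dimensional subspace must coincide with it.

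The heart of the argument is an a priori estimate showing that unit vectors in $X(x)^{-1}\overline W(x)$ are annihilated by $X$ in the limit. Fix a sequence $x_n\to x_0$ along which $X(x_n)^{-1}\overline W(x_n)\to L$ and pick orthonormal frames $\ell_1^{(n)},\dots,\ell_k^{(n)}$ of these subspaces which, after passing to a subsequence, converge to an orthonormal frame $\ell_1,\dots,\ell_k$ of $L$. For each fixed $i$, since $X(x_n)\ell_i^{(n)}\in\overline W(x_n)$ we may write $X(x_n)\ell_i^{(n)}=\sum_{j=1}^k a_{ij}^{(n)}w_j(x_n)$. Applying $S(x_n)$ and using the identity $S(x_n)X(x_n)=Y(x_n)$ together with $S(x_n)w_j(x_n)=\mu_j(x_n)w_j(x_n)$ yields $Y(x_n)\ell_i^{(n)}=\sum_{j=1}^k a_{ij}^{(n)}\mu_j(x_n)w_j(x_n)$. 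Because $Y$ is continuous at $x_0$ and $\ell_i^{(n)}$ is a unit vector, the left-hand side is bounded uniformly in $n$; by orthonormality of $\{w_j(x_n)\}$ this means $\sum_{j=1}^k|a_{ij}^{(n)}|^2\mu_j(x_n)^2$ stays bounded, and since each $\mu_j$, $1\le j\le k$, is a singular eigenvalue we have $|\mu_j(x_n)|\to\infty$, forcing $a_{ij}^{(n)}\to 0$ for every $j$. Hence $X(x_n)\ell_i^{(n)}=\sum_{j=1}^k a_{ij}^{(n)}w_j(x_n)\to 0$, and continuity of $X$ (Remark \ref{AnalyticRemark}) gives $X(x_0)\ell_i=0$. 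Therefore $L=\mathrm{span}\{\ell_1,\dots,\ell_k\}\subseteq\ker X(x_0)$, and by compactness $X(x)^{-1}\overline W(x)\to\ker X(x_0)$.

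I expect the main obstacle to be packaging the subspace convergence cleanly rather than any hard analysis: one cannot simply rescale the individual vectors $X(x)^{-1}w_j(x)$ and hope they converge, since several of them may blow up along nearly the same direction in $\ker X(x_0)$, so the Grassmannian/compactness formulation (tracking only the subspace, and showing each limit is forced into $\ker X(x_0)$) is what makes the proof go through. The one computational point that must be handled correctly is the identity $S(x)X(x)=Y(x)$: it is exactly this that converts the a priori divergent object $X(x)^{-1}w_j(x)$ into the bounded object $Y(x)\ell_i^{(n)}$, which is what powers the coefficient estimate $a_{ij}^{(n)}\to 0$. Throughout I will use Lemma \ref{AnalyticEigenvectors} and Corollary \ref{ContinuousSubspaces} only for the (elementary) facts that the $w_j(x)$ stay orthonormal and depend continuously on $x$.
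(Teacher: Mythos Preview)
Your argument is correct. It rests on the same analytic input as the paper's proof---the identity $S(x)X(x)=Y(x)$ together with the boundedness of $Y$ near $x_0$ and the divergence $|\mu_j(x)|\to\infty$---but packages the subspace convergence differently. The paper proceeds vector by vector: from $\|YX^{-1}w_j\|=|\mu_j|\to\infty$ and $\|Y\|$ bounded it deduces $\|X^{-1}w_j\|\to\infty$, hence $X\bigl(X^{-1}w_j/\|X^{-1}w_j\|\bigr)=w_j/\|X^{-1}w_j\|\to 0$, and then asserts $\operatorname{span}\{X^{-1}w_j\}\to\ker X(x_0)$. Your Grassmannian compactness argument instead tracks orthonormal frames of the entire subspace $X(x)^{-1}\overline W(x)$ and shows every subsequential limit is contained in, hence equal to, $\ker X(x_0)$. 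This is more than cosmetic: as you correctly point out, the normalized vectors $X^{-1}w_j/\|X^{-1}w_j\|$ could in principle collapse onto the same direction in $\ker X(x_0)$, so the paper's final inference from individual directions to convergence of the $k$-plane is not immediate. Your formulation closes that gap cleanly, at the modest cost of a subsequence extraction; the paper's version is shorter but leaves that last step to the reader.
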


\begin{proof}
Since $Sw_j = \mu_j w_j$ and $| \mu_j | \to \infty$ as $x \to x_0$, $\| S w_j \| = \| Y X^{-1} w_j \| \to \infty$ as $x \to x_0$.  Note that $Y$ is continuous near $x_0$, so $\| Y \|$ is continuous near $x_0$ as well.  Moreover, $\| YX^{-1} w_j \| \leq \| Y \| \| X^{-1} w_j \|$, implying $\| X^{-1} w_j \| \to \infty$ as $x \to x_0$. Therefore
\begin{align*}
X\left( \frac{X^{-1}w_j}{\| X^{-1} w_j \|} \right) &= \frac{w_j}{\| X^{-1}w_j \| } \to 0
\end{align*}
as $x \to x_0$, which means that ${\rm span}\{ X^{-1} w_j \} \to \ker X(x_0)$ as $x \to x_0$.  Since $\overline{W}(x) = {\rm span} \{ w_1(x), \ldots, w_k(x) \}$, we have shown that $X(x)^{-1}\overline{W}(x) \to \ker X(x_0)$ as $x \to x_0$.
\end{proof}

\begin{lemma}\label{SignatureLimit}
Let $x_0 \in (a, b)$, $A(x) : (a, b) \to M_{n \times n}(\mathbb{R})$ be a continuous path of symmetric matrices, and $W(x) : (a, b) \to {\rm Gr}(k, n)$ be a path of $k$-dimensional subspaces in the Grassmannian manifold of $k$-planes in $\mathbb{R}^n$.  Suppose $A(x_0)$ is invertible.  Then ${\rm sgn}\, A(x)|_{W(x)} \to {\rm sgn}\, A(x_0)|_{W(x_0)}$ as $x \to x_0$.
\end{lemma}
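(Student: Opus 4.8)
The plan is to show that $\sgn\, A(x)|_{W(x)}$ is in fact \emph{locally constant} near $x_0$, from which the asserted limit is immediate. Two ingredients are needed: (i) a continuous local trivialization of the moving subspace $W(x)$, so that the forms $A(x)|_{W(x)}$ can all be written via a single continuously varying family of $k\times k$ symmetric matrices; and (ii) the elementary fact that the signature is constant on the open set of non-degenerate symmetric $k\times k$ forms, which follows from continuity of eigenvalues. Sylvester's Law of Inertia (Theorem \ref{SylvestersLaw}) lets us carry out (i) in whatever basis is convenient, rather than forcing an orthonormal one.

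First I would trivialize the path locally: since $W:(a,b)\to{\rm Gr}(k,n)$ is continuous, the orthogonal projections $\Pi(x)$ onto $W(x)$ vary continuously with $x$ (one of the standard descriptions of the topology of the Grassmannian). Fixing a basis $e_1,\dots,e_k$ of $W(x_0)$, the vectors $\Pi(x)e_1,\dots,\Pi(x)e_k$ then depend continuously on $x$ and reduce to $e_1,\dots,e_k$ at $x_0$; since linear independence is an open condition, they form a basis $u_1(x),\dots,u_k(x)$ of $W(x)$ for $x$ in some interval $I\ni x_0$. Next I would pass to the $k\times k$ matrix $B(x):=\big(\langle u_i(x),A(x)u_j(x)\rangle\big)_{i,j=1}^{k}$, which is symmetric, depends continuously on $x\in I$, and represents the quadratic form $A(x)|_{W(x)}$ in the basis $\{u_j(x)\}$; by Theorem \ref{SylvestersLaw}, $\sgn\, A(x)|_{W(x)}=\sgn\, B(x)$. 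Finally, writing $\lambda_1(x)\le\cdots\le\lambda_k(x)$ for the eigenvalues of $B(x)$ — continuous on $I$ by Weyl's perturbation inequality — I would use the hypothesis to conclude that $A(x_0)|_{W(x_0)}$ is non-degenerate, i.e.\ $B(x_0)$ is invertible, so $\lambda_i(x_0)\neq 0$ for every $i$. Shrinking $I$, continuity then forces each $\lambda_i(x)$ to keep a fixed nonzero sign on $I$, whence $\sgn\, B(x)=\sum_i\sgn\,\lambda_i(x)=\sum_i\sgn\,\lambda_i(x_0)=\sgn\, B(x_0)$ for all $x\in I$, which is the claim.

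The only step with real content is producing the continuous frame $u_j(x)$, i.e.\ turning the statement ``$W(x)$ is a continuous path in ${\rm Gr}(k,n)$'' into something one can compute with; once that is in hand, the remainder is just continuity of the eigenvalues of a continuous family of symmetric matrices together with Sylvester. The only point where a hypothesis — rather than mere continuity — is genuinely used is the non-degeneracy of $A(x_0)$ restricted to $W(x_0)$: this is exactly what keeps each $\lambda_i$ away from $0$ at $x_0$, which would otherwise be the one mechanism by which $\sgn\, A(x)|_{W(x)}$ could jump as $x\to x_0$.
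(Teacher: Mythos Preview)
Your argument is correct and follows essentially the same route as the paper's proof: choose a continuous local frame for $W(x)$, form the Gram-type matrix $\big(\langle v_i(x),A(x)v_j(x)\rangle\big)$, and conclude by local constancy of the signature on non-degenerate forms. The paper simply asserts the existence of such a frame and the local constancy, whereas you supply explicit justifications (via orthogonal projections and continuity of eigenvalues), so your version is a fleshed-out form of the same proof.
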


\begin{proof}
Near $x_0$ we may pick a basis $\{ v_j(x) \}$ of $W(x)$ such that each $v_j(x)$ is continuous.  We may represent the quadratic form determined by $A(x)|_{W(x)}$ by the matrix $\alpha$ whose $(i, j)$-entry is $\alpha_{ij}(x) \overset{\rm def}{=} \langle v_i(x), A(x)v_j(x) \rangle$.  Since $A(x)$ and $v_i(x), v_j(x)$ are continuous, $A_{ij}(x)$ is continuous as well.  It follows that the path of quadratic forms determined by $A(x)|_{W(x)}$ is represented by a continuous path of symmetric matrices.  Therefore ${\rm sgn} \, A(x)|_{W(x)} \to A(x_0)|_{W(x_0)}$  as $x \to x_0$, as the signature of a quadratic form is a locally constant function on the space of non-degenerate quadratic forms.
\end{proof}

\begin{proposition}\label{SignatureLimitForS}
For $x$ close to $x_0$, ${\rm sgn}\, \Gamma(W, V, x_0) = {\rm sgn}\, \dot{S}|_{\overline{W}(x)}$.
\end{proposition}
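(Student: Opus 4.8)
The plan is to combine the coordinate-change identity \eqref{CoordChange} with the two limit lemmas just proved, letting $x \to x_0$ along a sequence where $X(x)$ is invertible. Recall from the discussion following \eqref{CrossingForm} that $\Gamma(W,V,x_0)$ is the symmetric form $X(x_0)^T\dot Y(x_0) - Y(x_0)^T\dot X(x_0)$ restricted to $\ker X(x_0)$. The strategy is to write this restricted form as a limit of the corresponding forms at nearby points $x$, transport those to $\dot S$ via Sylvester's law, and conclude by constancy of signature on the non-degenerate locus.

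First I would set $\Gamma(x) = X(x)^T\dot Y(x) - Y(x)^T\dot X(x)$, a continuous (indeed analytic, by Remark \ref{AnalyticRemark}) path of symmetric matrices near $x_0$, with $\Gamma(x_0)$ the matrix underlying $\Gamma(W,V,x_0)$. By Lemma \ref{AnalyticEigenvectors} and Corollary \ref{ContinuousSubspaces}, $\overline W(x)$ is a continuous path of $k$-dimensional subspaces, and since $X(x)^{-1}$ is well-defined for $x$ near (but not equal to) $x_0$ — as $\det X(x_0)=0$ but $\det X$ is analytic and not identically zero — Lemma \ref{LimitOfKernel} gives $X(x)^{-1}\overline W(x) \to \ker X(x_0)$. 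Here I would note that the hypothesis $\Gamma(W,V,x_0)$ is a regular crossing means $\Gamma(x_0)|_{\ker X(x_0)}$ is non-degenerate.

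Next I would apply Lemma \ref{SignatureLimit} twice. Applied with $A(x) = \Gamma(x)$ and the constant subspace $\ker X(x_0)$, together with continuity of $\Gamma$ and non-degeneracy of $\Gamma(x_0)|_{\ker X(x_0)}$, it shows $\operatorname{sgn}\Gamma(x)|_{\ker X(x_0)} = \operatorname{sgn}\Gamma(x_0)|_{\ker X(x_0)} = \operatorname{sgn}\Gamma(W,V,x_0)$ for $x$ close to $x_0$. Separately, since $X(x)^{-1}\overline W(x)$ and $\ker X(x_0)$ are two continuous paths of $k$-planes that agree in the limit, for $x$ close enough to $x_0$ the restricted forms $\Gamma(x)|_{X(x)^{-1}\overline W(x)}$ and $\Gamma(x)|_{\ker X(x_0)}$ have the same signature — this again follows from Lemma \ref{SignatureLimit} (or directly from local constancy of signature on non-degenerate forms, applied along a homotopy of subspaces). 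Finally, for such $x$ the matrix $X(x)$ is invertible, so \eqref{CoordChange} with $\widehat W = X(x)^{-1}\overline W(x)$ gives $\operatorname{sgn}\Gamma(x)|_{X(x)^{-1}\overline W(x)} = \operatorname{sgn}\dot S(x)|_{\overline W(x)}$. Chaining the three equalities yields $\operatorname{sgn}\Gamma(W,V,x_0) = \operatorname{sgn}\dot S(x)|_{\overline W(x)}$ for $x$ close to $x_0$.

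The main obstacle is the bookkeeping around the subspace convergence: I need to be careful that, although $X(x)^{-1}$ blows up as $x\to x_0$, the image $X(x)^{-1}\overline W(x)$ still converges (which is exactly the content of Lemma \ref{LimitOfKernel}, via the normalization by $\|X^{-1}w_j\|$), and that "signature of a continuously varying form restricted to a continuously varying subspace is locally constant near a non-degenerate value" is legitimate — this is the role of Lemma \ref{SignatureLimit}, but I should make sure its hypotheses genuinely apply when the ambient form $\Gamma(x)$ and the subspace $X(x)^{-1}\overline W(x)$ vary simultaneously, perhaps by choosing a continuous basis of $X(x)^{-1}\overline W(x)$ and examining the resulting continuous path of small symmetric matrices directly. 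A secondary point worth stating explicitly is that all of this is vacuous unless $x_0$ is an isolated crossing, which is guaranteed since we assume a regular crossing.
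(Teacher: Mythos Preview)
Your proposal is correct and follows essentially the same route as the paper: combine equation~\eqref{CoordChange} with Lemma~\ref{LimitOfKernel} and Lemma~\ref{SignatureLimit} to pass from ${\rm sgn}\,\dot S|_{\overline W(x)}$ to ${\rm sgn}\,(X^T\dot Y - Y^T\dot X)|_{\ker X(x_0)}$. The paper compresses this into a single line, applying Lemma~\ref{SignatureLimit} once to the path $A(x)=X(x)^T\dot Y(x)-Y(x)^T\dot X(x)$ together with the subspace path $X(x)^{-1}\overline W(x)$ (extended to $\ker X(x_0)$ at $x_0$ via Lemma~\ref{LimitOfKernel}); your two-step application of Lemma~\ref{SignatureLimit} is slightly more elaborate but amounts to the same argument, and your explicit attention to the regular-crossing hypothesis and to continuity of the subspace path is a welcome clarification of points the paper leaves implicit.
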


\begin{proof}
Putting together Lemmas \ref{LimitOfKernel} and \ref{SignatureLimit} with equation (\ref{CoordChange}), we have
\begin{align*}
{\rm sgn}\, \dot{S}\big|_{\overline{W}(x)} &= {\rm sgn}\, (X^T\dot{Y} - Y^T \dot{X}) \big|_{X^{-1}\overline{W}(x)} \to {\rm sgn}\, (X^T\dot{Y} - Y^T \dot{X})\big|_{\ker X},
\end{align*}
which is the signature of $\Gamma(W, V, x_0)$.
\end{proof}

Proposition \ref{SignatureLimitForS} and Corollary \ref{ContinuousSubspaces} require that $S(x)$ depends analytically on $x$, which is guaranteed if, in equation (\ref{ReactionDiffusion}), $f(u, x)$ depends on $u$ and $x$ analytically.  This condition is satisfied in all applications we have in mind; see, for example, \cite{CDB}.  If $f(u, x)$ is merely smooth in $u$ or $x$, $S(x)$ may not depend analytically on $x$, which means $\overline{W}(x)$ may not be continuous and Proposition \ref{SignatureLimitForS} may not hold.  See \cite[Chapter II, \textsection 5.3]{Kato} for details.

Differentiating the equation $S(x)w_j(x) = \mu_j(x)w_j(x)$ we find
\begin{align*}
\dot{S}(x)w_j(x) + S(x)\dot{w}_j(x) = \dot{\mu}_j(x)w_j(x) + \mu_j(x)\dot{w}_j(x).
\end{align*}
Moreover, differentiating the equation $\langle w_j(x), w_k(x) \rangle = \delta_{jk}$ we find that 
\begin{align*}
\langle \dot{w}_j(x), w_k(x) \rangle + \langle w_j(x), \dot{w}_k(x) \rangle = 0.
\end{align*}
Therefore
\begin{align*}
\langle w_j, \dot{S}w_k \rangle &= \langle w_j, \dot{\mu}_k w_k \rangle + \langle w_j, \mu_k \dot{w}_k \rangle - \langle w_j, S \dot{w}_k \rangle = \dot{\mu}_k \delta_{jk} + \mu_k \langle w_j, \dot{w}_k \rangle - \langle Sw_j, \dot{w}_k \rangle \\ 
&= \dot{\mu}_k \delta_{jk} + \mu_k \langle w_j, \dot{w}_k \rangle - \mu_j \langle w_j, \dot{w}_k \rangle = \dot{\mu}_k \delta_{jk} + (\mu_k - \mu_j) \langle w_j, \dot{w}_k \rangle.
\end{align*}
Letting $g_{jk} = \langle w_j, \dot{w}_k \rangle$ we have shown
\begin{align}\label{SummaryEqn}
\langle w_j,  \dot{S}w_k \rangle &= \delta_{jk} \dot{\mu}_k + (\mu_k - \mu_j) g_{jk}.
\end{align}

By equation (\ref{SummaryEqn}), we can represent the quadratic form determined by $\dot{S}|_{\overline{W}(x)}$ by the $k \times k$ matrix
\begin{align*}
Q &= \left( \begin{array}{ccccc}
\dot{\mu}_1 & (\mu_2 - \mu_1)g_{12} & (\mu_3 - \mu_1)g_{13} &  \ldots & (\mu_k - \mu_1)g_{1k}\\
(\mu_1 - \mu_2) g_{21} & \dot{\mu}_2 & (\mu_3 - \mu_1)g_{23} & \ldots & (\mu_k - \mu_2)g_{2k}\\
(\mu_1 - \mu_3) g_{31} & (\mu_2 - \mu_3)g_{32} & \dot{\mu}_3 & \ldots & (\mu_k - \mu_3)g_{2k}\\
\vdots & & & \ddots &  \vdots\\
(\mu_1 - \mu_k)g_{k1} & (\mu_2 - \mu_k)g_{k2} & (\mu_3 - \mu_k)g_{3k} & \ldots & \dot{\mu}_k 
\end{array} \right).
\end{align*}

Generically $W(x)$ intersects $V$ in a 1-dimensional subspace.  In this case, $S$ has one singular eigenvalue $\mu$ and $Q = (\dot{\mu})$.

\begin{theorem}\label{SpecialCaseMainTheorem}
If $\dim W(x_0) \cap V = 1$, then the signature of $\Gamma(W, V, x_0)$ is ${\rm sign}\, \dot{\mu}$.
\end{theorem}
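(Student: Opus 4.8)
The plan is to obtain the statement directly from Proposition \ref{SignatureLimitForS} and equation \ref{SummaryEqn}; in effect the theorem is the one-dimensional special case of Proposition \ref{SignatureLimitForS}, and the only thing to add is a trivial bookkeeping computation. First I would apply Proposition \ref{SignatureLimitForS} to write ${\rm sgn}\,\Gamma(W,V,x_0) = {\rm sgn}\,\dot S|_{\overline{W}(x)}$ for every $x$ in a punctured neighborhood of $x_0$. Since the left-hand side is a single number attached to the crossing at $x_0$, this identity also tells us that ${\rm sgn}\,\dot S|_{\overline{W}(x)}$ is constant near $x_0$.

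Next I would use the hypothesis $\dim W(x_0)\cap V = 1$: by \cite[Theorem 4.1]{BM} we then have $\dim\ker X(x_0)=1$ and $S(x)$ has exactly one singular eigenvalue, call it $\mu$, with analytic eigenvector $w_1$ (Lemma \ref{AnalyticEigenvectors}); correspondingly $\overline{W}(x)={\rm span}\{w_1(x)\}$ is one-dimensional and continuous (Corollary \ref{ContinuousSubspaces}). Expressing the quadratic form $\dot S|_{\overline{W}(x)}$ in the frame $\{w_1(x)\}$ produces the $1\times 1$ matrix whose only entry is $\langle w_1,\dot S w_1\rangle$. Specializing equation \ref{SummaryEqn} to $j=k=1$ makes the off-diagonal contribution $(\mu_k-\mu_j)g_{jk}$ vanish, leaving $\langle w_1,\dot S w_1\rangle=\dot\mu$; that is, the representing matrix is $Q=(\dot\mu)$, exactly as recorded just before the statement.

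Finally, the signature of a $1\times 1$ symmetric matrix $(a)$ is ${\rm sign}\,a$, so ${\rm sgn}\,Q={\rm sign}\,\dot\mu$, and chaining the identities gives ${\rm sgn}\,\Gamma(W,V,x_0)={\rm sign}\,\dot\mu$. I do not expect a genuine obstacle here: all the analytic content — the continuity of $\overline{W}(x)$, the limit $X(x)^{-1}\overline{W}(x)\to\ker X(x_0)$ (Lemma \ref{LimitOfKernel}), and the convergence of signatures of a continuous family of quadratic forms (Lemma \ref{SignatureLimit}) — is already absorbed into Proposition \ref{SignatureLimitForS}. The one point worth a sentence is that ${\rm sign}\,\dot\mu$ is unambiguous: $\mu$ has a true singularity at $x_0$, so $\dot\mu$ is unbounded and therefore nonzero on a punctured neighborhood of $x_0$, and by the constancy noted above it has a well-defined sign there (equivalently, the crossing is regular, so $\Gamma(W,V,x_0)$ is non-degenerate).
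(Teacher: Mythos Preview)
Your proposal is correct and follows essentially the same route as the paper: apply Proposition~\ref{SignatureLimitForS} to reduce ${\rm sgn}\,\Gamma(W,V,x_0)$ to ${\rm sgn}\,\dot S|_{\overline W(x)}$, then use equation~(\ref{SummaryEqn}) in the one-dimensional case to identify the representing matrix as $Q=(\dot\mu)$. The paper's proof is just a terse three-sentence version of exactly this argument, and your added remarks on the well-definedness of ${\rm sign}\,\dot\mu$ are a reasonable (if optional) clarification.
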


\begin{proof}
When $\dim W(x_0) \cap V = 1$, there is a single singular eigenvalue $\mu$.  The quadratic form determined by $\dot{S}|_{W'(x)}$ is given by $Q = (\dot{\mu})$.  Therefore ${\rm sgn} \, Q = {\rm sign}\, \dot{\mu}$.
\end{proof}

The remainder of this Section considers the case that $W(x)$ intersects $V$ in a $k$-dimensional subspace, $k >1$.

\subsection{Higher dimensional intersections}\label{HigherDimIntersections} 

The proof of Theorem \ref{SpecialCaseMainTheorem} is simple because $Q$ has no off-diagonal entries when $\dim W(x_0) \cap V = 1$.  For higher dimensional intersections, the off-diagonal terms $(\mu_j - \mu_k)g_{kj}$ may play an important role.  Nevertheless, the $g_{kj}$ factor does not introduce an additional singularity, because of

\begin{lemma}
$g_{jk}(x)$ are analytic near $x_0$.
\end{lemma}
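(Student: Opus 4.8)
The plan is to reduce the statement entirely to Lemma~\ref{AnalyticEigenvectors} together with the elementary closure properties of real-analytic functions. Recall that Lemma~\ref{AnalyticEigenvectors} produces, on a full neighborhood of $x_0$, an analytic eigenframe $\{w_j(x)\}$: it works with the matrix $(\det X)S$, which is analytic \emph{at} $x_0$ (not merely away from it), so the $w_j$ it produces are analytic across the singularity, and since $S$ and $(\det X)S$ have the same eigenvectors these same $w_j$ diagonalize $S$ where $S$ is defined. First I would note that this frame may be taken orthonormal without destroying analyticity: eigenvectors belonging to distinct $\mu_j(x)$ are automatically orthogonal because $S(x)$ is symmetric, and within a repeated eigenspace one applies Gram--Schmidt, whose only nonpolynomial ingredient is division by norms; since $\|w_j(x)\|^2=\langle w_j(x),w_j(x)\rangle$ is a finite sum of products of analytic functions, hence analytic, and is strictly positive (eigenvectors are nonzero), $\|w_j(x)\|=\sqrt{\langle w_j(x),w_j(x)\rangle}$ is analytic and nonvanishing, so normalizing keeps each $w_j$ analytic near $x_0$. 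This is precisely the reason Lemma~\ref{AnalyticEigenvectors} was phrased via $(\det X)S$ rather than $S$ directly.

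Next, since $w_j(x)$ is real-analytic near $x_0$, so is its derivative $\dot{w}_k(x)$: term-by-term differentiation of a convergent power series is again a convergent power series with the same radius of convergence. Finally, $g_{jk}(x)=\langle w_j(x),\dot{w}_k(x)\rangle=\sum_{i=1}^n (w_j)_i(x)\,(\dot{w}_k)_i(x)$ is a finite sum of products of analytic scalar functions, hence analytic on a neighborhood of $x_0$. This gives the claim.

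The only subtlety — and it is mild — is the compatibility of ``analytic'' with ``orthonormal'' when some singular eigenvalue $\mu_j$ is degenerate, since then the choice of eigenframe inside the corresponding eigenspace is not forced; this is exactly the point handled by the normalization and Gram--Schmidt remark above, and it is the one place where Lemma~\ref{AnalyticEigenvectors} (and Kato's theorem behind it) does real work. Beyond that, the proof is just the three closure facts for real-analytic functions: sums, products, and derivatives.
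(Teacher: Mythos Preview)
Your argument is correct and is essentially the paper's own proof: the paper simply says that since each $w_j$ is analytic, $\dot w_j$ is analytic, and hence $g_{jk}=\langle w_j,\dot w_k\rangle$ is analytic. Your additional care about simultaneously achieving orthonormality and analyticity (via Gram--Schmidt on the analytic frame from Lemma~\ref{AnalyticEigenvectors}) fills in a point the paper leaves implicit, but the route is the same.
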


\begin{proof}
Since each $w_j$ is analytic, $\dot{w}_j$ is analytic as well and so $g_{jk} = \langle w_j, \dot{w}_k \rangle$ is analytic.
\end{proof}

Since $S(x)$ is symmetric for each $x$, there is some path $Q(x)$ of orthogonal matrices such that $Q(x)^TS(x)Q(x) = \tilde{S}(x)$ is diagonal.  Using equation (\ref{FirstRiccatiEqn}) we have
\begin{align*}
\dot{\tilde{S}} &= \dot{Q}^TSQ + Q^T\dot{S}Q + Q^TS\dot{Q}\\
&= \dot{Q}^TQ Q^T SQ + Q^T(C + DS - SA + SBS) Q + Q^T S Q Q^T \dot{Q}\\
&= \dot{Q}^TQ Q^TSQ + Q^TCQ + Q^TDQQ^TSQ - Q^TSQ Q^TAQ + \\
&\indent+(Q^TSQ)Q^TBQ (Q^TSQ) + Q^TSQ Q^T \dot{Q}.
\end{align*}
Letting 
\begin{align*}
\tilde{A} &= Q^TAQ, \hphantom{bbbb} \tilde{B} = Q^TBQ, \hphantom{bbbb} \tilde{C} = Q^TCQ,\\
\tilde{D} &= Q^TDQ, \hphantom{bbbb} M = \dot{Q}^TQ = -Q^T\dot{Q},
\end{align*}
we may rewrite this equation as
\begin{align*}
\dot{\tilde{S}} &= M \tilde{S} - \tilde{S}M + \tilde{C} + \tilde{D}\tilde{S} - \tilde{S}\tilde{A} + \tilde{S}\tilde{B}\tilde{S}.
\end{align*}
$\tilde{S}$ is a diagonal matrix whose entries are the eigenvalues of $S$, $\{ \mu_j \}$.  Looking at the diagonal entries in this matrix equation we see that the eigenvalues satisfy the scalar Riccati equations
\begin{align}\label{GeneralCaseScalarRiccati}
\dot{\mu}_{j} &= \tilde{c}_{jj} + \tilde{d}_{jj}\mu_{j} - \tilde{a}_{jj}\mu_{j} + \tilde{b}_{jj}\mu_{j}^2,
\end{align}
for $j  = 1, \ldots, n$.  Notice that the diagonal entries of the term $M\tilde{S} - \tilde{S}M$ all vanish because $\tilde{S}$ is diagonal and $M$ is skew-symmetric.  This differential equation governs the order of singularity that $\mu_j$ admits.

\begin{proposition}\label{GeneralTamedSingularity}
Suppose $\mu_j$ has a singularity at $x_0$ and $\tilde{b}_{jj} \neq 0$ near $x_0$.  Then the singularity has order one.
\end{proposition}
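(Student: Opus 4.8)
The plan is to exploit the classical linearization of a scalar Riccati equation into a second-order linear ODE, and then to read off the order of the singularity of $\mu_j$ from the order of a zero of an analytic function. The hypothesis $\tilde b_{jj}\neq 0$ is precisely what makes the linearized equation \emph{regular} at $x_0$.

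First I would record that every coefficient appearing in (\ref{GeneralCaseScalarRiccati}) is analytic near $x_0$: by Lemma \ref{AnalyticEigenvectors} the orthonormal frame of eigenvectors may be chosen analytic, so the diagonalizing orthogonal path $Q(x)$ is analytic; hence $\tilde A,\tilde B,\tilde C,\tilde D$ and $M=\dot Q^{T}Q$ are analytic, and in particular $b:=\tilde b_{jj}$, $e:=\tilde d_{jj}-\tilde a_{jj}$ and $c:=\tilde c_{jj}$ are analytic on a neighborhood of $x_0$, with $b$ nowhere zero there by hypothesis. Also $\mu_j$ is analytic on a punctured neighborhood of $x_0$ (cf.\ the proof of Lemma \ref{AnalyticEigenvectors}: on that punctured neighborhood $S$ and $w_j$ are analytic, so $\mu_j=\langle w_j,Sw_j\rangle$ is analytic), and $|\mu_j(x)|\to\infty$ as $x\to x_0$ by assumption.

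Next I would linearize. Setting $\mu_j=-\dot v/(bv)$ and substituting into (\ref{GeneralCaseScalarRiccati}) shows, after simplification, that $v$ satisfies
\begin{align*}
\ddot v-\Big(\tfrac{\dot b}{b}+e\Big)\dot v+bc\,v=0 .
\end{align*}
Since $b$ is analytic and nowhere zero near $x_0$, $\dot b/b$ is analytic at $x_0$, so this is a regular linear second-order equation at $x_0$. Conversely — and this is the direction I would actually use — starting from $\mu_j$ defined on one component, say $(x_0,x_0+\varepsilon)$, of the punctured neighborhood, I fix $x_1\in(x_0,x_0+\varepsilon)$ and let $v$ solve $\dot v=-b\mu_j v$ with $v(x_1)=1$. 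Then $v$ is nonvanishing on $(x_0,x_0+\varepsilon)$, and a direct computation using (\ref{GeneralCaseScalarRiccati}) gives
\begin{align*}
\ddot v-\Big(\tfrac{\dot b}{b}+e\Big)\dot v+bc\,v = bv\big(-\dot\mu_j+b\mu_j^{2}+e\mu_j+c\big)=0
\end{align*}
on $(x_0,x_0+\varepsilon)$, so $v$ solves the regular linear equation there. By the local existence and uniqueness theorem for linear ODEs with analytic coefficients, $v$ prolongs to an analytic solution on a full neighborhood of $x_0$, agreeing with the original $v$ near $x_1$ by uniqueness.

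Finally I would conclude: the extended $v$ is not identically zero (it is nonzero on $(x_0,x_0+\varepsilon)$), hence has a zero of some finite order $m\ge 0$ at $x_0$ and no other zero nearby; writing $\dot v/v=m/(x-x_0)+h(x)$ with $h$ analytic, we obtain $\mu_j=-\dot v/(bv)=-\big(m/(x-x_0)+h(x)\big)/b(x)$, which has at worst a simple pole at $x_0$. Since $|\mu_j|\to\infty$ we must have $m\ge 1$, so the pole has order exactly one. The one genuinely delicate step is the prolongation of $v$ across $x_0$: one must justify that a solution of the linearized equation defined only on a punctured side extends analytically through $x_0$, and this is exactly where regularity of the linearized equation — hence the assumption $\tilde b_{jj}\neq 0$ — is indispensable; everything else is formal manipulation.
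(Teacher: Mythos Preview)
Your proof is correct and takes a genuinely different route from the paper's. The paper argues by order-counting: it first shows, via the fact that $(\det X)S$ is analytic at $x_0$, that $\mu_j$ is the quotient of two analytic functions and hence has a pole of some finite order $k$; then it compares orders on the two sides of (\ref{GeneralCaseScalarRiccati}) --- the left side has order $k+1$ while the right side has order $2k$ because $\tilde b_{jj}\neq 0$ --- forcing $k=1$. You instead use the classical Riccati-to-linear substitution $\mu_j=-\dot v/(bv)$, produce a regular second-order linear equation with analytic coefficients, extend $v$ analytically across $x_0$, and read off that $\mu_j$ has at worst (hence exactly) a simple pole.

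The trade-off: the paper's argument is two lines once the finite-order pole is in hand, but it needs that structural input from $(\det X)S$. Your argument is longer but more self-contained --- it never appeals to the $(\det X)S$ trick and would work for any scalar Riccati equation with analytic coefficients and nonvanishing quadratic coefficient, so it makes the role of the hypothesis $\tilde b_{jj}\neq 0$ especially transparent (it is exactly what keeps the linearized equation regular at $x_0$). One small point worth making explicit: your construction of $v$ is carried out on one side of $x_0$; the same argument on the other side (or the meromorphy of $\mu_j$ already implicit in Lemma~\ref{AnalyticEigenvectors}) ensures the two one-sided conclusions describe the same simple pole.
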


\begin{proof}
We first note that $A, B, C, D$ are assumed analytic and therefore they are bounded near $x_0$.  Since $Q$ is an orthogonal matrix, $|Q_{jk}| \leq 1$ for each entry $Q_{jk}$.  Therefore $\tilde{A}, \tilde{B}, \tilde{C}, \tilde{D}$ are bounded near $x_0$.

In the proof of Lemma \ref{AnalyticEigenvectors}, we show that $(\det X)S$ is analytic and without singularity at $x_0$.  Thus its eigenvalues $\{ (\det X)\mu_j \}$ are analytic and without singularity.  Therefore we can write $\mu_j = f_j/g_j$ with $f_j$ and $g_j$ analytic near $x_0$, by setting $f_j = (\det X)\mu_j$ and $g_j = \det X$.  In particular, we see that $\mu_j$ has a pole at $x_0$ of finite order.

Suppose $\mu_j$ has a singularity of order $k$.  A straightforward calculation shows that $\dot{\mu}_j$ has a singularity of order $k + 1$.  Then the right hand side of equation (\ref{GeneralCaseScalarRiccati}) has a singularity of order $2k$, because $\tilde{b}_{jj} \neq 0$ near $x_0$.  Because both sides of the equation have singularities of the same order, $k + 1 = 2k$, implying $k = 1$.
\end{proof}

Recall that we are primarily interested in equation (\ref{FirstOrderSystem}) in which $A = D = 0$ and $B = I$.  In this case $\tilde{B} = I$ and $\tilde{b}_{jj} = 1$ for all $j$.  Therefore the hypotheses of Proposition \ref{GeneralTamedSingularity} are satisfied and we have

\begin{theorem}\label{MainTheorem}
The signature of $\Gamma(W, V, x_0)$ is $\# \{ \mu_j : \dot{\mu_j} \to + \infty \} - \# \{\mu_j : \dot{\mu_j} \to -\infty \}$.
\end{theorem}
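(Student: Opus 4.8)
The plan is to reduce the computation of $\mathrm{sgn}\,\Gamma(W,V,x_0)$, which by Proposition \ref{SignatureLimitForS} equals $\mathrm{sgn}\,\dot S|_{\overline W(x)}$ for $x$ near $x_0$, to a statement purely about the signs of the diagonal entries $\dot\mu_j$ of the matrix $Q$. The key structural input is that in the setting of equation (\ref{FirstOrderSystem}) we have $\tilde b_{jj}=1$, so Proposition \ref{GeneralTamedSingularity} applies: each singular eigenvalue $\mu_j$ has a pole of order exactly one at $x_0$, and hence $\dot\mu_j$ has a pole of order exactly two, in particular $\dot\mu_j\to+\infty$ or $\dot\mu_j\to-\infty$ (it cannot oscillate or stay bounded). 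So the right-hand side of the theorem is well-defined, and $\#\{\mu_j:\dot\mu_j\to+\infty\}+\#\{\mu_j:\dot\mu_j\to-\infty\}=k$.

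First I would compare the sizes of the diagonal and off-diagonal entries of $Q$ as $x\to x_0$. The diagonal entry $\dot\mu_j$ blows up like $(x-x_0)^{-2}$. An off-diagonal entry is $(\mu_k-\mu_j)g_{jk}$ where $g_{jk}$ is analytic (bounded) near $x_0$ by the Lemma preceding Proposition \ref{GeneralTamedSingularity}, and $\mu_k-\mu_j$ blows up at worst like $(x-x_0)^{-1}$. Thus every off-diagonal entry of $Q$ is $o$ of every diagonal entry. This is the heart of the argument and also the step I expect to require the most care: one must make precise that a symmetric matrix whose diagonal entries all have distinct orders of blow-up and dominate all off-diagonal entries has, for $x$ near $x_0$, the same signature as its diagonal part. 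Concretely, write $Q(x)=\Lambda(x)^{1/2}\,\big(\,\mathrm{diag}(\mathrm{sgn}\,\dot\mu_j)+E(x)\,\big)\,\Lambda(x)^{1/2}$ where $\Lambda(x)=\mathrm{diag}(|\dot\mu_j(x)|)$ is invertible near $x_0$ (each $|\dot\mu_j|\to\infty$) and the entries of the symmetric error matrix $E(x)$ tend to $0$; then Sylvester's Law of Inertia (Theorem \ref{SylvestersLaw}) with $U=\Lambda(x)^{1/2}$ gives $\mathrm{sgn}\,Q(x)=\mathrm{sgn}\big(\mathrm{diag}(\mathrm{sgn}\,\dot\mu_j)+E(x)\big)$, and since the signature is locally constant on nondegenerate quadratic forms (as in Lemma \ref{SignatureLimit}) and $\mathrm{diag}(\mathrm{sgn}\,\dot\mu_j)$ is nondegenerate, for $x$ sufficiently close to $x_0$ this equals $\mathrm{sgn}\,\mathrm{diag}(\mathrm{sgn}\,\dot\mu_j)=\#\{\mu_j:\dot\mu_j\to+\infty\}-\#\{\mu_j:\dot\mu_j\to-\infty\}$. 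One should double-check that $E(x)$ genuinely tends to $0$: its $(j,k)$ entry for $j\ne k$ is $(\mu_k-\mu_j)g_{jk}\,|\dot\mu_j|^{-1/2}|\dot\mu_k|^{-1/2}$, which is $O\big((x-x_0)^{-1}\big)\cdot O(1)\cdot O\big((x-x_0)^{2}\big)=O(x-x_0)\to 0$ using the exact orders from Proposition \ref{GeneralTamedSingularity}.

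Finally I would assemble the pieces: by Proposition \ref{SignatureLimitForS}, $\mathrm{sgn}\,\Gamma(W,V,x_0)=\mathrm{sgn}\,\dot S|_{\overline W(x)}$ for $x$ close to $x_0$; by equation (\ref{SummaryEqn}) this signature is $\mathrm{sgn}\,Q(x)$; and by the diagonal-domination argument above, $\mathrm{sgn}\,Q(x)=\#\{\mu_j:\dot\mu_j\to+\infty\}-\#\{\mu_j:\dot\mu_j\to-\infty\}$ for $x$ sufficiently near $x_0$. Since the left-hand side does not depend on $x$, this proves the theorem. The only genuine obstacle is the diagonal-domination estimate, and its resolution hinges entirely on the order-one pole conclusion of Proposition \ref{GeneralTamedSingularity} — without the matching orders ($\mu_j$ order $1$, $\dot\mu_j$ order $2$, $\mu_k-\mu_j$ order at most $1$) one could not conclude that the off-diagonal perturbation is negligible.
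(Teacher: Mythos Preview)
Your proposal is correct and follows essentially the same route as the paper's proof: both use Proposition~\ref{SignatureLimitForS} to reduce to $\mathrm{sgn}\,Q(x)$, then exploit that by Proposition~\ref{GeneralTamedSingularity} the diagonal entries $\dot\mu_j$ have poles of order two while the off-diagonal entries $(\mu_k-\mu_j)g_{jk}$ have poles of order at most one, so after a suitable rescaling the off-diagonal part vanishes in the limit and the signature is read off from the signs of $\dot\mu_j$. The only cosmetic difference is that the paper rescales $Q$ by the scalar $(x-x_0)^2$ and lets the resulting matrix $\bar Q(x)$ converge to $\mathrm{diag}(m_1,\dots,m_k)$, whereas you conjugate by the diagonal matrix $\Lambda(x)^{1/2}=\mathrm{diag}(|\dot\mu_j|^{1/2})$ and invoke Sylvester's Law; both normalizations achieve the same end.
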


\begin{proof}
Because $g_{jk}$ is analytic near $x_0$ and $\mu_j - \mu_k$ blows up with at most order 1, we see that $(x - x_0)(\mu_k - \mu_j)g_{jk}$ approaches a finite number as $x \to x_0$.  It follows that $(x - x_0)^2(\mu_k - \mu_j)g_{jk} \to 0$ as $x \to x_0$.  Similarly, $\dot{\mu}_j$ has a singularity of order 2 at $x_0$, implying $(x - x_0)^2 \dot{\mu}_j \to m_j $ as $x \to x_0$, for some non-zero $m_j \in \mathbb{R}$.  We may write
\begin{align*}
Q(x) &= \left( \begin{array}{ccccc}
\dot{\mu}_1 & (\mu_2 - \mu_1)g_{12} & (\mu_3 - \mu_1)g_{13} &  \ldots & (\mu_k - \mu_1)g_{1k}\\
(\mu_1 - \mu_2) g_{21} & \dot{\mu}_2 & (\mu_3 - \mu_1)g_{23} & \ldots & (\mu_k - \mu_2)g_{2k}\\
(\mu_1 - \mu_3) g_{31} & (\mu_2 - \mu_3)g_{32} & \dot{\mu}_3 & \ldots & (\mu_k - \mu_3)g_{2k}\\
\vdots & & & \ddots &  \vdots\\
(\mu_1 - \mu_k)g_{k1} & (\mu_2 - \mu_k)g_{k2} & (\mu_3 - \mu_k)g_{3k} & \ldots & \dot{\mu}_k 
\end{array} \right)\\
&= \frac{1}{(x - x_0)^2}\left( \begin{array}{cccc}
(x - x_0)^2\dot{\mu}_1 & (x - x_0)^2(\mu_2 - \mu_1)g_{12} &  \ldots & (x - x_0)^2(\mu_k - \mu_1)g_{1k}\\
(x - x_0)^2(\mu_1 - \mu_2) g_{21} & (x - x_0)^2\dot{\mu}_2 &  \ldots & (x - x_0)^2(\mu_k - \mu_2)g_{2k}\\
(x - x_0)^2(\mu_1 - \mu_3) g_{31} & (x - x_0)^2(\mu_2 - \mu_3)g_{32}  & \ldots & (x - x_0)^2(\mu_k - \mu_3)g_{2k}\\
\vdots & & \ddots &  \vdots\\
(x - x_0)^2(\mu_1 - \mu_k)g_{k1} & (x - x_0)^2(\mu_2 - \mu_k)g_{k2} & \ldots & (x - x_0)^2\dot{\mu}_k 
\end{array} \right)\\
&= \frac{1}{(x - x_0)^2} \bar{Q}(x)
\end{align*}
Notice that ${\rm sgn}\, Q(x) = {\rm sgn}\, \bar{Q}(x)$, since the eigenvalues of ${Q}(x)$ and $\bar{Q}(x)$ have the same signs.  As $x \to x_0$, the discussion above shows that
\begin{align*}
\bar{Q}(x) \longrightarrow \left( \begin{array}{ccccc}
m_1 & 0 & 0 & \ldots & 0\\
0 & m_2 & 0 &  & 0\\
0 & 0 & m_3 &  & 0\\
\vdots & & & \ddots & \vdots\\
0 & 0 & 0 & \ldots & m_k \end{array} \right) = {\rm diag}(m_1, \ldots, m_k)
\end{align*}
as $x \to x_0$.  Since the signature of a nondegenerate symmetric matrix is locally constant, ${\rm sgn}\, \bar{Q}(x) = {\rm sgn}\,{\rm diag}(m_1, \ldots, m_k)$ for $x$ close to $x_0$.  Moreover, because $m_j$ and $\dot{\mu}_j$ have the same sign,
\begin{align*}
{\rm sgn}\, {\rm diag}(m_1, \ldots, m_k) = \# \{\mu_j : \dot{\mu}_j \to + \infty \} - \# \{\mu_j : \dot{\mu}_j \to -\infty \}.
\end{align*}
Therefore, if $x$ is sufficiently close to $x_0$,
\begin{align*}
{\rm sgn} Q(x) = {\rm sgn}\bar{Q}(x) &= \# \{\mu_j : \dot{\mu}_j \to + \infty \} - \# \{ \mu_j : \dot{\mu}_j \to -\infty \}.
\end{align*}
Because ${\rm sgn}\, \Gamma(W, V, x_0) = {\rm sgn}\, Q(x)$ for $x$ close to $x_0$, we have proven our claim.

\end{proof}

\begin{remark}\label{OneSidedLimits}
Because the eigenvalues $\mu_j$ have singularities of order one at $x_0$, $\dot{\mu}_j \to \infty$ if and only if $\lim_{x \to x_0^-}\mu_j = \infty$ and $\lim_{x \to x_0^+}\mu_j = -\infty$, and similarly $\dot{\mu}_j \to -\infty$ if and only if $\lim_{x \to x_0^-}\mu_j = -\infty$ and $\lim_{x \to x_0^+}\mu_j = \infty$.  In practice it may be easier to determine the contribution to the Maslov index at $x_0$ from these one-sided limits, as they can be determined from a plot of $\mu_j$ near $x_0$.  We employ this strategy in Section \ref{Examples}.
\end{remark}

\section{Examples}\label{Examples}

In this section we present two examples of reaction-diffusion equations with steady state solutions whose stability we study by Maslov index techniques and the matrix Riccati equation.  Both examples come from \cite{CDB}.

\subsection{A 1-dimensional example}

This example comes from \cite[Section 5]{CDB}.  We consider the reaction-diffusion equation
\begin{align}\label{FirstPDE}
u_t &= u_{xx} - u + u^2,
\end{align}
for the scalar valued function $u(x, t)$.  The function $\hat{u}(x) = \frac{3}{2} \sech^2(\frac{x}{2})$ is a steady state solution.  Thus we may linearize (\ref{FirstPDE}) about $\hat{u}$ to obtain the eigenvalue equation
\begin{align}\label{FirstLinearizedPDE}
\mathcal{L}u &\overset{\rm def}{=} u_{xx} - u + 2\hat{u}(x) u = \lambda u.
\end{align}
Chardard, Dias, and Bridges use the Evan's function to show the eigenvalues of $\mathcal{L}$ are $\{ -3/4, 0, 5/4 \}$.  We now demonstrate how the Maslov index can be used to detect this positive eigenvalue.  Setting $p = u$ and $q = u_x$ we may convert equation (\ref{FirstLinearizedPDE}) into the first order system
\begin{align*}
\left( \begin{array}{c}
p_x\\
q_x \end{array} \right) &=
\left( \begin{array}{cc}
0 & 1\\
1 + \lambda - 2\hat{u}(x) & 0 \end{array} \right)
\left( \begin{array}{c}
p\\
q \end{array} \right).
\end{align*}
Using the notation $\frac{d}{ds}h = h_s$, we see the stable and unstable subspaces are given by
\begin{align*}
E^s(x, \lambda) &= \left( \begin{array}{c}
2h^-\\
h_s^{-} - \gamma h^- \end{array} \right), \hphantom{bbb}  E^u(x, \lambda) = \left( \begin{array}{c}
2h^+\\
h_s^+ + \gamma h^+ \end{array} \right),
\end{align*}
where we identify the $2 \times 1$ matrices with their images in $\mathbb{R}^2$.  Here $s = x/2$, $\gamma = 2\sqrt{ \lambda + 1}$, and
\begin{align*}
h^{\pm}(s) = \pm a_0 + a_1 \tanh(s) \pm a_2 \tanh^2(s) + \tanh^2(s),
\end{align*}
where
\begin{align*}
a_0 &= \frac{\gamma}{15} (4 - \gamma^2), \hphantom{qqqq} a_1 = \frac{1}{5}(2\gamma^2 - 3), \hphantom{qqqq} a_2 = -\gamma.
\end{align*}
Thus
\begin{align*}
E^u(x, \lambda) &= \left( \begin{array}{c}
2h^+\\
h_s^+ + \gamma h^+ \end{array} \right) = \left( \begin{array}{c}
X\\
Y \end{array} \right)
\end{align*}
and $S = Y/X = (h_s^+ + \gamma h^+)/2h^+$.  Since $S$ is a $1 \times 1$ matrix we can identify it with its eigenvalue $\mu$, and singularities of $\mu$ indicate a change in the Maslov index.  The following figure plots $\mu$ when $\lambda = 1$.\\

\begin{center}
\includegraphics[height = 1.5in, width = 2in]{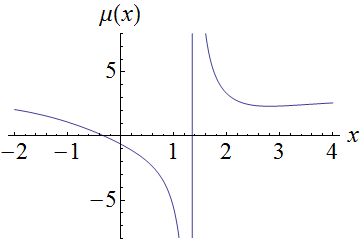}
\end{center}

In this case, $\mu$ has a singularity near $x = 1.34981$, and the plot indicates a change in Maslov index by $-1$.  Since $\mu$ has no other singularities, we conclude that ${\rm Mas}(E^u(x, 1); V) = -1$.

A straightforward computation shows
\begin{align*}
E^u(\pm \infty, \lambda) &= \lim_{x \to \pm \infty} E^u(x, \lambda) = \left( \begin{array}{c}
1\\
\sqrt{ \lambda + 1} \end{array} \right),\\
E^s(\infty, \lambda) &= \lim_{x \to \infty} E^s(x, \lambda) = \left( \begin{array}{c}
1\\
-\sqrt{ \lambda + 1} \end{array} \right)
\end{align*}
so that $s( E^s(\infty, \lambda), V; E^u(-\infty, \lambda), E^u(\infty, \lambda) ) = 0$.  Therefore 
\begin{align*}
{\rm Mas}(E^u(x, \lambda); V) = {\rm Mas}(E^u(x, \lambda); E^s(\infty, \lambda)),
\end{align*}
and we conclude that ${\rm Mas}(E^u(x, 1); E^s(\infty, 1)) = -1$.  Consistent with the conjecture discussed in Section \ref{DynamicalSystemBackground}, the Maslov index detects the single eigenvalue larger than $\lambda = 1$ and we conclude that $\hat{u}$ is unstable.

\subsection{A 2-dimensional example}

This example comes from \cite[Section 11]{CDB}.  We consider the system
\begin{align}\label{NonDecoupledSystem}
u_t &= u_{xx} - 4u + 6u^2 - c(u - v)\\
v_t &= v_{xx} - 4v + 6v^2 + c(u - v) \nonumber
\end{align}
where $c > -2$.  The functions $u = v = \hat{u} \overset{\rm def}{=} \sech^2(x)$ form a steady solution to this system.  Linearizing about $\hat{u}$ leads us to consider the eigenvalue problem
\begin{align}
u_{xx}  + 12 \sech^2(x) u &= (\lambda + 4 + c )u - cv\\
v_{xx} + 12\sech^2(x) v &= -cu + (\lambda + 4 + c)v \nonumber
\end{align}
Moreover, we may perform a change of variables $u = \tilde{u} - \tilde{v}$, $v = \tilde{u} + \tilde{v}$ to transform (\ref{NonDecoupledSystem}) into the decoupled system
\begin{align}
\tilde{u}_{xx} + 12 \sech^2(x) \tilde{u} &= (\lambda + 4) \tilde{u},\\
\tilde{v}_{xx} + 12 \sech^2(x) \tilde{v} &= (\lambda + 4 + 2c) \tilde{v}. \nonumber
\end{align}
In \cite[Appendix B]{CDB}, Chardard, Bridges, and Dias state that this system has the solutions
\begin{align*}
\tilde{u}^\pm  &= e^{\pm \sqrt{ \lambda + 4}\, x}( \pm a_0 + a_1 \tanh(x) \pm a_2 \tanh^2(x) + \tanh^3(x) )\\
\tilde{v}^\pm  &= e^{\pm \sqrt{ \lambda + 4 + 2c}\, x}( \pm b_0 + b_1 \tanh(x) \pm b_2 \tanh^2(x) + \tanh^3(x) )
\end{align*}
where
\begin{align*}
a_0 &= - \frac{ \lambda \sqrt{ \lambda + 4}}{15}, \hphantom{qqqq} a_1 = \frac{1}{5}(2\lambda + 5), \hphantom{qqqq} a_2 = -\sqrt{\lambda + 4},\\
b_0 &= - \frac{ (\lambda + 2c)\sqrt{ \lambda + 4 + 2c}}{15}, \hphantom{qqq} b_1 = \frac{1}{5}(2\lambda + 4c + 5), \hphantom{qqq} b_2 = - \sqrt{ \lambda + 4 + 2c},
\end{align*}
provided $\lambda + 4 > 0$ and $\lambda + 4 + 2c > 0$.  Thus we have
\begin{align*}
E^u(x, \lambda) &= \left( \begin{array}{cc}
\tilde{u}^+ & 0\\
0 & \tilde{v}^+\\
\tilde{u}_x^+ & 0\\
0 & \tilde{v}_x^+ \end{array} \right) 
= \left( \begin{array}{c}
X\\
Y \end{array} \right),\\
S &= YX^{-1} = \left( \begin{array}{cc}
\tilde{u}_x^+/\tilde{u}^+ & 0\\
0 & \tilde{v}_x^+/\tilde{v}^+ \end{array} \right).
\end{align*}
The eigenvalues of $S$ are $\mu_1 = \tilde{u}_x^+/\tilde{u}^+$ and $\mu_2 = \tilde{v}_x^+/\tilde{v}^+$, plotted below for $\lambda = 1$ and $c = -1$.
\begin{center}
\begin{tabular}{ccc}
\includegraphics[height = 1.5in, width = 2in]{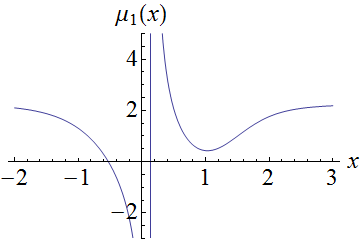} & \hphantom{bbb}&
\includegraphics[height = 1.5in, width = 2in]{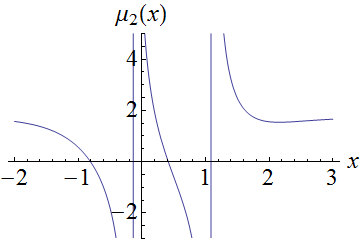}
\end{tabular}
\end{center}

These plots show that $\mu_1$ has one singularity contributing $-1$ to the Maslov index and $\mu_2$ has two singularities, each contributing $-1$ to the Maslov index.  Therefore ${\rm Mas}(E^u(x, 1); V) = -3$.

A straightforward computation shows
\begin{align*}
E^u(\pm \infty, \lambda) = \lim_{x \to \pm \infty} E^u(x, \lambda) = 
\left( \begin{array}{cc}
1 & 0\\
0 & 1\\
\sqrt{\lambda + 4} & 0\\
0 & \sqrt{\lambda + 4 + 2c} \end{array} \right),\\
E^s(\infty, \lambda) = \lim_{x \to \infty} E^s(x, \lambda) =
\left( \begin{array}{cc}
1 & 0\\
0 & 1\\
-\sqrt{\lambda + 4} & 0\\
0 & -\sqrt{ \lambda + 4 + 2c} \end{array} \right),
\end{align*}
implying 
\begin{align*}
s(E^s(\infty, \lambda), V ; E^u(-\infty, \lambda), E^u(\infty, \lambda) ) = 0,
\end{align*}
and therefore ${\rm Mas}(E^u(x, 1) ; E^s(\infty, 1)) = {\rm Mas}(E^u(x, 1); V ) = -3$.  The Maslov index detects that when $c = -1$ there are three eigenvalues greater $\lambda = 1$, which agrees with computations in \cite[Section 11]{CDB} showing the positive eigenvalues are $\lambda = 2, 3, 5$.  We conclude that $(\hat{u}, \hat{u})$ is an unstable steady state solution.

\section{Concluding Remarks}\label{ConcludingRemarks}

In this paper we have focused on reaction-diffusion equations of the form (\ref{ReactionDiffusion}), while Maslov index techniques have been applied to a much larger class of evolution equations.  In \cite{CDB}, for example, the Maslov index is used to study stability in the fifth order Korteweg-de Vries equation and a model PDE for long-wave-short-wave resonance.  In these cases, the process of relating an eigenvalue problem to the Maslov index of the path of unstable subspaces $E^u(x, \lambda)$ goes through unchanged;  after linearizing about a steady state solution and considering the associated eigenvalue problem, we transform our second order equation into a first order system of the form (\ref{MatrixODE}).

An important point is that when we begin with a reaction-diffusion equation, our first order system has $B = I$, implying that the hypotheses of Proposition \ref{GeneralTamedSingularity} are satisfied.  When considering this larger class of PDEs we may have $B \neq I$, so Proposition \ref{GeneralTamedSingularity} and consequently Theorem \ref{MainTheorem} may not apply.  Nevertheless, in these cases one may check a fortiori that the hypotheses of Proposition \ref{GeneralTamedSingularity} are true, and in this way Theorem \ref{MainTheorem} may be used to calculate the Maslov index.

Furthermore, Proposition \ref{GeneralTamedSingularity} is not used in the proof of Theorem \ref{SpecialCaseMainTheorem}, which describes the contribution to the Maslov index at 1-dimensional intersections of $E^u(x, \lambda)$ with the reference plane and so it applies to the larger class of evolution equations which Maslov index techniques have been applied to. By Remark \ref{GenericIntersection}, intersections are generically 1-dimensional so that Theorem \ref{SpecialCaseMainTheorem} suffices to compute the Maslov index in the majority of examples, even in this larger class of PDEs.

\end{document}